\newcommand{\R}{{\mathbb R}}
\newcommand{\N}{{\mathbb N}}
\newcommand{\Z}{{\mathbb Z}}
\newcommand{\Q}{{\mathbb Q}}
\newcommand{\diam}{{\rm diam}\,}
\newcommand{\Norm}{{\| \cdot \|}}
\newcommand{\vece}{{\bf e}}
\newcommand{\Syst}{{\operatorname{Syst}}}
\newcommand{\mG}{{\mathcal{G}}}
\newcommand{\Image}{{\operatorname{Im}}}
\newcommand{\mT}{{\mathcal{T}}}
\newcommand{\mF}{\mathcal{F}}
\newcommand{\mC}{{\mathcal{C}}}
\newcommand{\Lcor}{{L_{\rho_1}^{\rm{cor}}}}
\newcommand{\Lhub}{{L_{\rho_1}^{\rm{hub}}}}
\numberwithin{equation}{section} 
\newtheorem{theorem}[equation]{Theorem}
\newtheorem{lemma}[equation]{Lemma}
\newtheorem{corollary}[equation]{Corollary}
\newtheorem{proposition}[equation]{Proposition}
\newtheorem{remark}[equation]{Remark}
\newtheorem{dfn}[equation]{Definition}
\newtheorem{notarem}[equation]{Notation, Assumptions \& Remarks}
\gdef\SetFigFont#1#2#3#4#5{%
 \reset@font\fontsize{#1}{#2pt}%
 \fontfamily{#3}\fontseries{#4}\fontshape{#5}%
 \selectfont}%
\begin{document}

\title[metrics on a $2$-torus with a partially prescribed stable norm]{Constructing metrics on a $2$-torus with a partially prescribed stable norm}


\author[E. Makover]{Eran Makover}
\address[Eran Makover]{Central Connecticut State University\\ Department of Mathematics \\New Britain, CT}
\email{makover@ccsu.edu}


\author[H. Parlier]{Hugo Parlier $^\ddagger$}
\address[Hugo Parlier]{Department of Mathematics, University of Fribourg\\
  Switzerland}
\email{hugo.parlier@gmail.com}\thanks{$^\ddagger$Research partially supported by Swiss National Science Foundation grant number PP00P2\textunderscore 128557}


\author[C. J. Sutton]{Craig J. Sutton $^\sharp$}
\address[Craig J. Sutton]{Dartmouth College\\ Department of Mathematics \\ Hanover, NH 03755}
\email{craig.j.sutton@dartmouth.edu}
\thanks{$^\sharp$Research partially supported by National Science Foundation
grant DMS 0605247 and a Career Enhancement Fellowship from the 
Woodroow Wilson National Fellowship Foundation}

\subjclass{53C20, 53C22}
\keywords{stable norm, length spectrum}


\begin{abstract}
A result of Bangert states that the stable norm associated to any Riemannian metric 
on the $2$-torus $T^2$ is strictly convex. 
We demonstrate that the space of stable norms associated to metrics on $T^2$ 
forms a proper dense subset of the space of strictly convex norms on $\R^2$.
In particular, given a strictly convex norm $\Norm_\infty$ on $\R^2$ we construct 
a sequence $\langle \Norm_j \rangle_{j=1}^{\infty}$ of stable norms that converge 
to $\Norm_\infty$ in the topology of compact convergence
and have the property that for each $r > 0$ there is an $N \equiv N(r)$ such that $\Norm_j$ 
agrees with $\Norm_\infty$ on $\Z^2 \cap \{ (a,b) : a^2 + b^2 \leq r \}$ for all $j \geq N$. 
Using this result, we are able to derive results on multiplicities which arise in the minimum 
length spectrum of $2$-tori and in the simple length spectrum of hyperbolic tori.
\end{abstract}
\maketitle

\section{Introduction}

Given a closed $n$-dimensional manifold $M$ with first Betti-number $b = b_1(M)$, 
we let $H_1(M; \Z)_\R$ denote the collection of integral classes in 
the $b$-dimensional real vector space $H_1(M; \R)$. 
Then $H_1(M;\Z)_\R$ is a co-compact lattice in $H_1(M; \R)$.
Letting $T \simeq \Z_{m_1} \times \cdots \times \Z_{m_q}$ denote the torsion subgroup of 
$H_1(M; \Z) \simeq \Z^b \times T$, we see that $H_1(M; \Z)_\R$ 
can be identified with $H_1(M; \Z)/T$ via the surjective homomorphism $\phi : H_1(M; \Z) \to H_1(M; \Z)_\R$
given by 
$$\sum_{i=1}^{b} z_i h_i + t \mapsto (\sum_{i=1}^{b} z_i h_i) \otimes_{\Z} 1,$$
where $\{h_1, \ldots , h_b\}$ is some $\Z$-basis for $H_1(M; \Z)$, the $z_i$'s are integers and $t \in T$.
Now, let $\Psi: \pi_1(M) \to H_1(M; \Z)$ denote the Hurewicz homomorphism \cite{Lee}, then the 
regular covering $p_{\rm{Abel}}: M_{\rm{Abel}} \to M$ of $M$ corresponding to $\ker(\Psi) = [\pi_1(M), \pi_1(M)]$ 
is the \emph{universal abelian covering} of $M$. It is universal in the sense that 
it covers any other normal covering for which the deck transformations form an abelian group. 
The \emph{universal torsion-free abelian cover} $p_{\rm{tor}} : M_{\rm{tor}} \to M$ corresponds to the normal subgroup 
$\Psi^{-1}(T) \lhd \pi_1(M)$: it covers all other normal coverings for which the group of deck transformations 
is torsion-free and abelian. Under the above identifications we see that the group of deck transformations of $M_{\rm{tor}} \to M$ 
is given by the lattice $H_1(M; \Z)_\R$. If $M$ has positive first Betti number, then to each metric $g$  
we may associate a geometrically significant norm $\| \cdot \|_s$ on $H_1(M; \R)$ in the following manner.

For each $h \in H_1(M; \Z)_\R \simeq \Z^b \leq H_1(M; \R)$ let 
$$f(h) = \inf \{ L_g(\sigma) : \sigma \mbox{ is a smooth loop representing the class } h \},$$
where $L_g$ is the length functional associated to the Riemannian metric $g$ on $M$. Then for each $n \in \N$ 
we let $f_n : \frac{1}{n}H_1(M; \Z)_\R \to \R_{\geq 0}$ be given by 
$$f_n(h) = \frac{1}{n}f(nh).$$
It can be seen that the $f_n$'s converge uniformly on comapct sets to a norm $\| \cdot \|_s$ on $H_1(M;\R)$ that is 
known as the \emph{stable norm} of $g$ \cite{Bang}. 
In particular, if $\{ v_n \}_{n \in \N}$ is a sequence in $H_1(M; \Z)_\R$ 
such that $\lim_{n \to \infty} \frac{v_n}{n} = v \in H_1(M; \R)$, then 
$$\| v \|_s = \lim_{n \to \infty} \frac{f(v_n)}{n}.$$
An integral class $v \in H_1(M; \Z)_{\R}$ is said to be \emph{stable} if there is an $n \in \N$ such that 
$\| v \|_s = f_n(v) = \frac{f(nv)}{n}.$
Intuitively, the stable norm $\Norm_s$ describes the geometry of the universal torsion-free abelian cover
$(M_{\rm{tor}}, g_{\rm{tor}})$ in a manner where the fundamental domain of 
the $H_1(M; \Z)_\R$-action appears to be arbitrarily small (cf. \cite[p. 250]{Gromov}).

Now, let $p: (N, h) \to (M,g)$ be a Riemannian covering. We will say that a non-constant geodesic 
$\gamma : \R \to (M,g)$ is $p$-\emph{minimal} (or \emph{minimal with respect to $p$}) 
if for some and, hence, every lift $\tilde{\gamma}: \R \to N$ of $\gamma$, the geodesic $\tilde{\gamma}$ is distance minimizing between any two of its points. That is, $\gamma$ is $p$-minimal if for any $t_1 \leq t_2$ we have $d_N(\tilde{\gamma}(t_1), \tilde{\gamma}(t_2)) = L_g(\tilde{\gamma} \upharpoonright [t_1, t_2])$.
In the event that $p$ is the universal Riemannian covering we will refer to $p$-minimal geodesics as \emph{minimal}, and when $\gamma$ is minimal with respect to the universal abelian cover 
$p_{\rm{abel}}: (M_{\rm{Abel}}, h) \to (M,g)$ we will say that $\gamma$ is 
an \emph{abelian minimal geodesic}. In the case where $\pi_1(M)$ is abelian---e.g., $M$ is a torus---these two definitions coincide.

An interesting application of the stable norm $\Norm_{s}$ is that characteristics of its unit ball 
$B \subset H_1(T^2; \Z)$ can be used to deduce the existence (and properties) of minimal 
abelian geodesics. For instance, we have the following result due to Bangert.

\begin{theorem}[\cite{Bang} Theorem 4.4 \& 4.8]
Let $(M,g)$ be a Riemannian manifold and let 
$B \subseteq H_1(M;\R)$ be the unit ball corresponding to its stable norm. 
For every supporting hyperplane $H$ of $B$ there is an abelian minimal geodesic
$\gamma: \R \to (M,g)$. As a consequence, $(M,g)$ has at least 
$k \equiv \dim H_1(M;\R)$ geometrically distinct abelian minimal geodesics. 
\end{theorem}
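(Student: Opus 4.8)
The plan is to reinterpret a supporting hyperplane of $B$ as a calibrating cohomology class and then obtain $\gamma$ as a limit of short loops in the universal torsion-free abelian cover $(M_{\rm tor},g_{\rm tor})$. Let $H$ be a supporting hyperplane of $B$ and fix $v\in H\cap B$, so $\|v\|_s=1$. Write $H=\{u\in H_1(M;\R):\ell(u)=1\}$ with $\ell\in H^1(M;\R)=H_1(M;\R)^{*}$, $\ell\le\|\cdot\|_s$, and $\ell=\|\cdot\|_s$ on the face $F:=H\cap B$. Since the infimum over closed $1$-forms $\omega$ with $[\omega]=\ell$ of the comass $\sup\{\omega_x(u):x\in M,\ |u|_g=1\}$ equals the dual stable norm $\sup_{u\neq0}\ell(u)/\|u\|_s=1$, for each $\varepsilon>0$ I would fix such an $\omega_\varepsilon$ of comass $\le1+\varepsilon$. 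Its pullback to $M_{\rm tor}$ is exact, $p_{\rm tor}^{*}\omega_\varepsilon=df_\varepsilon$, with $f_\varepsilon$ smooth, $(1+\varepsilon)$-Lipschitz for $g_{\rm tor}$, and $f_\varepsilon(c\cdot x)=f_\varepsilon(x)+\ell(c)$ for every deck transformation $c\in H_1(M;\Z)_\R$.

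To build $\gamma$, use the description of the stable norm from the introduction: choose $v_n\in H_1(M;\Z)_\R$ with $v_n/n\to v$ and loops $\sigma_n$ in $M$ representing $v_n$ with $L_g(\sigma_n)\le f(v_n)+1/n$, so $L_g(\sigma_n)/n\to1$. A lift $\tilde\sigma_n$ to $M_{\rm tor}$ runs from some $x_n$ to $c_n\cdot x_n$, where $c_n\in H_1(M;\Z)_\R$ corresponds to $v_n$, and
\[ \ell(v_n)=f_\varepsilon(c_n\cdot x_n)-f_\varepsilon(x_n)=\int_{\sigma_n}\omega_\varepsilon\le(1+\varepsilon)L_g(\sigma_n). \]
Since $\ell(v_n)/n\to\ell(v)=1$ as well, this bound is asymptotically sharp, which forces $df_\varepsilon$ to have length close to $1$ and to be nearly tangent to $\tilde\sigma_n$ off a set of small relative length (for $\varepsilon$ small). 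After replacing $\sigma_n$ by a shortest loop in its class (or by a distance-minimizing subsegment in $M_{\rm tor}$), translating by deck transformations so the $\tilde\sigma_n$ meet a fixed compact set, reparametrizing by arc length, and normalizing the $f_\varepsilon$ to vanish at a base point, I would pass to the limit over $n\to\infty$ and $\varepsilon\to0$ by Arzel\`a--Ascoli and a diagonal argument, obtaining a geodesic $\gamma:\R\to M_{\rm tor}$ together with a $1$-Lipschitz function $\phi$ on $M_{\rm tor}$ along which $\phi$ increases at unit rate; the geodesic is complete because the segments have length tending to infinity while advancing $\phi$ at a definite rate. I expect this compactness step---existence of the limit, completeness, non-degeneracy after translation, and survival of an exact calibration in the limit---to be the principal obstacle, and it is exactly the point where Bangert's refinement of the Hedlund--Morse theory of minimal (class $\mA$) geodesics enters.

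Granting $\gamma$ and $\phi$, minimality is immediate: writing $\bar\gamma=p_{\rm tor}\circ\gamma$, of which $\gamma$ is a lift, for any $t_1\le t_2$
\[ L_{g_{\rm tor}}\bigl(\gamma\upharpoonright[t_1,t_2]\bigr)=t_2-t_1=\phi(\gamma(t_2))-\phi(\gamma(t_1))\le d_{M_{\rm tor}}\bigl(\gamma(t_1),\gamma(t_2)\bigr), \]
the middle equality because $\phi$ increases at unit rate along the arc-length parametrized $\gamma$ and the inequality because $\phi$ is $1$-Lipschitz; the opposite inequality is trivial, so $\gamma$ realizes distances in $M_{\rm tor}$ between any two of its points. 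As $M_{\rm Abel}\to M_{\rm tor}$ is a Riemannian covering (with finite deck group $T$), distances only increase upstairs, so the lift of $\bar\gamma$ to $M_{\rm Abel}$ is distance-minimizing as well; hence $\bar\gamma$ is an abelian minimal geodesic.

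For the lower bound, note $\bar\gamma$ is calibrated by $\ell$, so its asymptotic homology direction $w\in\partial B$ satisfies $\ell(w)=\|w\|_s$, i.e.\ $w\in F=H\cap B$; and two abelian minimal geodesics with non-proportional asymptotic directions are geometrically distinct, their $M_{\rm tor}$-lifts being lines heading to infinity in the two directions. It thus suffices to find $k=\dim H_1(M;\R)$ supporting hyperplanes of $B$ whose faces realize pairwise non-proportional directions, which one does by picking $k$ pairwise non-proportional \emph{exposed} points of the symmetric convex body $B$: its boundary has infinitely many extreme points unless $B$ is a polytope, in which case $B$ has at least $2k$ vertices and hence at least $k$ pairwise non-antipodal ones, and distinct extreme points of a convex body with nonempty interior are never positively proportional. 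Each such point is the unique contact point of a supporting hyperplane, hence the asymptotic direction of the corresponding abelian minimal geodesic, so one obtains at least $k$ geometrically distinct abelian minimal geodesics.
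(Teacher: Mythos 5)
First, a point of reference: the paper does not prove this statement at all --- it is quoted from Bangert's \emph{Minimal geodesics} (Theorems 4.4 and 4.8) and used as a black box --- so your proposal has to stand on its own against Bangert's argument. Your architecture is a legitimate alternative route: dualize the supporting hyperplane to almost-calibrating closed one-forms via the mass--comass duality, lift to $M_{\rm tor}$, and realize the minimal geodesic as a calibrated limit of nearly minimizing loops. The pieces you actually carry out are sound: the exactness of $p_{\rm tor}^{*}\omega_{\varepsilon}$ and the equivariance of $f_{\varepsilon}$, the implication ``calibrated by a $1$-Lipschitz $\phi$ $\Rightarrow$ minimizing,'' the passage from $M_{\rm tor}$ to $M_{\rm Abel}$, and the count via exposed points of the symmetric convex body $B$ (modulo invoking Straszewicz to pass from extreme to exposed points in the non-polytopal case). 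Note also that the duality $\inf\{\mathrm{comass}(\omega):[\omega]=\ell\}=\sup_{u\neq0}\ell(u)/\|u\|_{s}$ is Federer's theorem; the direction you need (existence of $\omega_{\varepsilon}$ with comass $\le 1+\varepsilon$) is the nontrivial one and must be cited, not asserted.

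The genuine gap is the one you flag yourself: producing the bi-infinite calibrated pair $(\gamma,\phi)$. This is not a routine Arzel\`a--Ascoli step, and as sketched it would fail. The total calibration defect of $\tilde\sigma_{n}$ against $f_{\varepsilon}$ is $(1+\varepsilon)L_{g}(\sigma_{n})-\ell(v_{n})\sim \varepsilon n+o(n)$, which does \emph{not} tend to zero; so ``nearly tangent off a set of small relative length'' only says a proportion $1-O(\sqrt{\varepsilon})$ of each lift is good, and the finite window you keep after translating into a fixed fundamental domain may land entirely in the bad set. One must, for each $\varepsilon$ and each window length $T$, select a translate along which the defect density is at most its average $O(T\varepsilon)$; take the $\sigma_{n}$ to be genuine geodesics (shortest representatives) so the limit is a complete geodesic; and then interleave the limits $n\to\infty$ and $\varepsilon\to 0$ so that the surviving $1$-Lipschitz $\phi$ increases at \emph{exactly} unit rate along all of $\gamma$, not merely near the basepoint. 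Until that is done, neither the existence of $\gamma$ nor its minimality is established --- and this selection-plus-diagonal argument is precisely the content of Bangert's Theorem 4.4, which he proves without forms by bounding $d(\tilde x,h\cdot\tilde x)$ from below directly with the supporting functional. Deferring it to ``where Bangert's refinement of Hedlund--Morse enters'' defers the entire theorem.
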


\noindent
In light of the relationship between the existence of minimal geodesics and the unit ball of the stable norm, 
it is an interesting question to determine which norms on $H_1(M; \R)$ arise as the stable norm associated 
to a Riemannian metric on $M$. In the case of the two-torus Bangert has made the following observation.

\begin{theorem}[\cite{Bang} p. 267, \cite{Bang2} Sec. 5]\label{thm:StrictlyConvex}
The collection of stable norms on $T^2$, denoted $\mathcal{N}_{\rm{stab}}(T^2)$, 
is a proper subset of the collection of strictly convex norms on $\R^2$, denoted 
by $\mathcal{N}_+(\R^2)$.
\end{theorem}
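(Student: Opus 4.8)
The plan is to establish separately the two assertions contained in the statement: (i) every stable norm on $T^2$ is strictly convex, and (ii) some strictly convex norm on $\R^2$ is not realized as a stable norm. Part (i) is the theorem of Bangert referred to in the abstract, and I would argue it as follows. Let $\| \cdot \|_s$ be the stable norm of $(T^2,g)$, with unit ball $B \subseteq H_1(T^2;\R) \cong \R^2$, and suppose toward a contradiction that $\partial B$ contains a nondegenerate segment $\sigma$; let $\ell$ be the unique linear functional with $\ell \leq \| \cdot \|_s$ and $\ell \equiv 1$ on the affine line spanned by $\sigma$. Since the relative interior of $\sigma$ subtends an arc of directions of positive length, and the directions of primitive integral vectors are dense in $\RP^1$, I can choose non-proportional primitive classes $h_1, h_2 \in \Z^2$ with each $h_i/\|h_i\|_s$ in the relative interior of $\sigma$. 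Then $\ell(h_i) = \|h_i\|_s$, so $\|h_1+h_2\|_s \geq \ell(h_1+h_2) = \|h_1\|_s + \|h_2\|_s$, which together with the triangle inequality forces $\|h_1+h_2\|_s = \|h_1\|_s + \|h_2\|_s$.

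Next I would bring in the theory of minimal geodesics on the $2$-torus (Hedlund, Bangert): the shortest closed geodesic in a primitive homology class of $T^2$ is a minimal geodesic, hence realizes the stable norm, so $f(h_i) = \|h_i\|_s$. Combined with the previous equality this pinches $f(h_1)+f(h_2) = \|h_1+h_2\|_s \leq f(h_1+h_2) \leq f(h_1)+f(h_2)$, so $f(h_1+h_2) = f(h_1)+f(h_2)$. Let $\gamma_i$ be a shortest closed geodesic in the class $h_i$. Since $h_1, h_2$ are non-proportional their algebraic intersection number is nonzero, so $\gamma_1$ and $\gamma_2$ meet; since two geodesics that share a tangent line at a common point coincide, $\gamma_1$ and $\gamma_2$ must cross transversally at some point $p$. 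Basing $\gamma_1$ and $\gamma_2$ at $p$ and concatenating, I obtain a loop in the class $h_1+h_2$ of length $f(h_1)+f(h_2) = f(h_1+h_2)$ which has a corner at $p$ and hence is not a smooth closed geodesic; therefore it is not length minimizing in its homology class, so $f(h_1+h_2) < f(h_1)+f(h_2)$ --- a contradiction. Thus $\partial B$ contains no segment and $\| \cdot \|_s \in \mathcal{N}_+(\R^2)$.

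For the properness in (ii) I would invoke the complementary regularity fact from the same circle of ideas: the stable norm of a Riemannian metric on $T^2$ is differentiable --- its unit ball has a unique supporting line --- at every boundary point lying in an irrational direction. (Equivalently, Mather's minimal average action for the geodesic flow on $T^2$ is $C^1$ at irrational rotation vectors; this is also consistent with the picture above, since a corner of $\partial B$ at an irrational direction would, via the supporting-hyperplane theorem of Bangert quoted above, yield two minimal geodesics with the same irrational rotation vector but incompatible calibrating $1$-forms.) Granting this, it remains only to exhibit a strictly convex norm whose unit ball has a corner in an irrational direction: for instance, take $B$ to be a centrally symmetric lens-shaped region --- the intersection of two congruent round disks --- rotated so that its two vertices lie in an irrational direction. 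Its boundary contains no line segment, so $B$ defines a norm in $\mathcal{N}_+(\R^2)$; but that norm fails to be differentiable at the two irrational directions of the vertices, so it cannot be the stable norm of any metric on $T^2$.

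The routine parts of this plan are the convex-geometric density observation, the intersection-number input, and the corner argument; the genuine content lies in the two $T^2$-specific rigidity statements --- that shortest closed geodesics in primitive classes are minimal, and that the stable norm is $C^1$ in irrational directions --- both of which come from the Hedlund--Bangert analysis of minimal geodesics on the torus and which I would cite rather than reprove.
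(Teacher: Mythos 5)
Your proposal is correct and follows essentially the same route as the paper: strict convexity comes from the stability of integral classes on $T^2$ (so shortest closed geodesics realize the stable norm) plus the corner-smoothing argument at a transversal intersection of two minimizers in rationally independent classes, and properness comes from Bangert's differentiability of the stable norm at irrational directions together with a strictly convex norm having a corner in an irrational direction. Your only real deviations are cosmetic --- you run the first half contrapositively (segment in $\partial B$ forces equality for two rational directions) rather than asserting the strict inequality directly, and your centrally symmetric lens is in fact a cleaner witness than the paper's ``tear drop,'' which as literally described is not symmetric and so not a norm ball.
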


Indeed, for any metric $g$ on a $2$-torus we have that $f(kh) = |k|f(h)$ for any $k \in \Z$ and $h \in H_1(M; \Z)_{\R}$.
Therefore, $\|h\|_s = f(h)$ on $H_1(M; \Z)_{\R}$. Now, suppose $h_1, h_2 \in H_1(M; \Z)_{\R}$ are rationally independent 
and are represented by shortest geodesics $\gamma_1$ and $\gamma_2$ respectively. Then $\gamma_1$ and $\gamma_2$ must intersect transversally at $\gamma_1(0) = \gamma_2(0)$, for instance, 
and we conclude that $\gamma_1 * \gamma_2$ is not smooth.
Therefore, since  the non-smooth curve $\gamma_1 * \gamma_2$ represents the integral  
homology class $h_1 + h_2$ we obtain the following strict inequality
$$\| h_1 + h_2\|_s < L_g(\gamma_1* \gamma_2) = L_g(\gamma_1) + L_g(\gamma_2) = \|h_1\|_s + \|h_2\|_s.$$
It then follows that $\Norm_s$ is strictly convex norm on $H_1(T^2, \R) \simeq \R^2$. 
To see that $\mathcal{N}_{\rm{stab}}(T^2)$ is a proper subset of $\mathcal{N}_+(\R^2)$, we recall that Bangert observed that on $T^2$ the stable norm is differentiable at  irrational points \cite[Sec. 5]{Bang2}. 
That is, the unit ball of a stable norm associated to a Riemannian metric on $T^2$ has a unique 
supporting line at points $(x,y)$ where $y/x$ is irrational. 
But, one can readily see that there are many strictly convex norms which are not differentiable at such points. 
For instance, one need only take a strictly convex norm for which the 
unit ball is a tear drop whose singularity is placed at $(x,y)$ with $y/x$ irrational. 
And we conclude that $\mathcal{N}_{\rm{stab}}(T^2)$ is a proper subset of $\mathcal{N}_+(\R^2)$.

In this article we will be concerned with stable norms of Riemannian $2$-tori; 
henceforth referred to as \emph{toral stable norms}.
We show that the toral stable norms form a dense proper subset in the 
collection of all strictly convex norms on  $H_1(T^2; \R)\simeq \R^2$. 
Specifically, we demonstrate the following.

\begin{theorem}\label{thm:StableNorm}
Let $\Norm_\infty$ be a strictly convex norm on $H_1(T^2; \R)$ and 
let $\langle h_j \equiv (a_j, b_j)\rangle_{j =1}^{\infty}$ be a sequence consisting of all of the 
integral homology classes $H_1(M; \Z)_{\R} \simeq \Z^2$ where $\| (a_j, b_j) \|_\infty \leq \| (a_{j+1}, b_{j+1}) \|_\infty$ for each $j$. 
Then there exists a sequence $\langle \Norm_j \rangle_{j = 1}^{\infty}$ of toral stable norms such that
\begin{enumerate}[{\bf(i)}] 
\item for each $k \in \N$ we have
$\| (a_j, b_j) \|_k = \| (a_j, b_j) \|_\infty \mbox{ for } 1\leq j \leq k,$
while $\| (a_j, b_j) \|_k \geq \| (a_k, b_k) \|_\infty$ for all $j \geq k+1$; 
\item $\lim_{j \to \infty} \Norm_j = \Norm_\infty$ in the topology of compact convergence.
\end{enumerate}
\end{theorem}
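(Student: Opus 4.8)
The goal is to approximate an arbitrary strictly convex norm $\Norm_\infty$ on $\R^2$ by toral stable norms that agree with $\Norm_\infty$ on larger and larger balls of lattice points. The natural strategy is constructive: build explicit metrics on $T^2$ whose stable norm we can compute, at least on a prescribed finite set of primitive integral classes. The key realization is that a flat metric on $T^2$ has a stable norm equal to the flat norm itself (the Euclidean length of geodesics is additive along straight lines on the universal cover $\R^2$), so flat norms are always toral stable norms. More generally, a metric built by gluing flat pieces along a suitable graph --- a ``chain'' or ``hub'' construction --- will have a stable norm computable by a combinatorial minimization over cycles in the graph, and one has a great deal of freedom in choosing edge lengths. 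So the first step is to isolate a flexible family of metrics $g$ on $T^2$ for which $\|\cdot\|_s$ restricted to a finite collection of lattice classes can be prescribed, subject only to the constraint (forced by Theorem~\ref{thm:StrictlyConvex}) that the prescribed values be consistent with \emph{some} strictly convex norm.

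\medskip

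\textbf{Main steps.} First I would fix $k$ and the finite list $h_1 = (a_1,b_1), \ldots, h_k = (a_k,b_k)$ of integral classes with $\|h_1\|_\infty \le \cdots \le \|h_k\|_\infty$, and observe that since $\Norm_\infty$ is strictly convex, the values $c_i := \|h_i\|_\infty$ are the lengths of the sides of a (possibly degenerate in direction, but strictly convex) polygon; concretely, in each primitive rational direction $h_i/\gcd(a_i,b_i)$ we wish the shortest closed geodesic in that homotopy class to have length $\|h_i\|_\infty$, and for all other primitive classes $h$ we only require $\|h\|_s \ge \|h_k\|_\infty$ (which is automatic once the norm is large enough outside a bounded set). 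Second, I would construct a metric $g_k$ realizing this: take a Euclidean ``hub'' --- a flat torus whose injectivity radius and shape are chosen so that its own stable norm dominates $\Norm_\infty$ on the relevant classes --- and then, for each $i = 1, \ldots, k$, attach a thin flat cylindrical ``handle'' (a short geodesic bigon / neck) realizing the class $h_i$ with total length exactly $c_i$, arranged so that the geodesics realizing distinct $h_i$'s are forced to cross transversally, exactly as in the strict-convexity argument sketched after Theorem~\ref{thm:StrictlyConvex}. Third, I would verify that in this metric $\|h_i\|_s = c_i$ for $1 \le i \le k$: the attached handle gives the upper bound $\|h_i\|_s \le c_i$, and the lower bound follows by showing any loop representing $h_i$ that is shorter than $c_i$ would either have to stay in the handle (impossible by construction of its length) or traverse the hub (impossible since the hub is too big), using the covering-space description of the stable norm and the fact that lengths add along the lattice action on $M_{\rm tor}$. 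Fourth, having built $\Norm_k := \|\cdot\|_{s,g_k}$ for each $k$, property (i) holds by construction, and property (ii) --- convergence on compact sets --- follows because on the lattice points the $\Norm_k$ agree with $\Norm_\infty$ on an exhausting sequence of sets, each $\Norm_k$ is a (strictly convex) norm hence determined up to uniform control by its values on $\Z^2$, and one checks the values stay pinned between $\Norm_\infty$ and a controlled upper envelope that converges to $\Norm_\infty$; a routine Arzel\`a--Ascoli / convexity argument upgrades pointwise agreement on a dense exhausting set to compact convergence.

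\medskip

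\textbf{The main obstacle.} The delicate point is the lower bound $\|h_i\|_s \ge c_i$ for the built metric: one must rule out the possibility that a clever loop exploits the \emph{interaction} between the hub and several handles --- for instance running partway along handle $j$, through a corner of the hub, and out handle $i$ --- to beat the length $c_i$. Controlling this requires a careful choice of the gluing loci and of the hub's size relative to all the $c_i$'s (this is where $\|h_i\|_\infty \le \|h_{i+1}\|_\infty$ and the strict convexity of $\Norm_\infty$ are really used, to guarantee the triangle-type inequalities among the $c_i$ leave enough room), together with a clean bookkeeping of how homology classes decompose over the handles-and-hub graph. A secondary technical point is ensuring the metrics $g_k$ are genuinely smooth (smoothing the flat gluings without changing lengths by more than $o(1)$, which costs nothing in the limit), and verifying that the ``outside'' condition $\|h_j\|_k \ge \|h_k\|_\infty$ for $j \ge k+1$ is compatible with realizing the $k$ prescribed values simultaneously --- again a consequence of taking the hub large. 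I expect the handle-interaction lower bound to be the part of the argument that needs the most care.
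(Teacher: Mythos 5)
Your plan is essentially the paper's: dig thin flat corridors along the flat geodesics representing $h_1,\dots,h_k$ inside a flat torus of systole at least $\ell_k$, reduce the lower bound to a combinatorial minimization over cycles in the resulting ``hubs--and--corridors'' graph, and get part \textbf{(ii)} from uniform Lipschitz control plus pointwise convergence on lattice directions. That last step is fine as you describe it (once $\|(1,0)\|_j$ and $\|(0,1)\|_j$ stabilize, all the $\Norm_j$ share one Lipschitz constant, and equicontinuity upgrades pointwise convergence on the dense set of rational rays to compact convergence).

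The genuine gap is exactly where you flag it, and flagging it is not the same as closing it: the strict lower bound for a loop that mixes several handles is the heart of the theorem, and your proposal gives no mechanism for it. The paper's mechanism is worth spelling out because it is not ``hub size'' alone. First, the corridor lengths are calibrated so that each edge $\gamma_{ij}$ of the graph has length exactly $q_{ij}\ell_i$ with $q_{ij}\in\Q$ the \emph{fraction} of $\gamma_i$ that the segment occupies. Then for a minimal cycle $c$ in the graph one clears denominators and computes the homology class of $c^N$ by comparing algebraic intersection numbers, obtaining
$N h_c=\sum_{i,j}N\bigl(n^+_{ij}-n^-_{ij}\bigr)q_{ij}h_i$, while $L(c^N)=\sum_{i,j}N\bigl(n^+_{ij}+n^-_{ij}\bigr)q_{ij}\ell_i$. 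Strict convexity of $\Norm_\infty$ applied to this decomposition yields the \emph{strict} inequality $L_{\rho_1}(c)>\|h_c\|_\infty$ for every minimal cycle that is not a reparametrization of some $\gamma_i$ --- including cycles representing one of the $h_i$ by a different route. Strictness is then converted into a uniform margin $\tilde\epsilon>0$ by observing that only finitely many cycles matter (any cycle with more than $\lfloor\ell_k/\zeta\rfloor$ edges is automatically longer than $\ell_k$ because each corridor costs at least $\zeta$), and finally the hubs are shrunk so that the total length a loop could save inside all hubs it visits is less than $\tilde\epsilon$. Without the rational calibration of edge lengths and the resulting homological identity, ``clean bookkeeping'' has nothing to act on, and without the finiteness-of-cycles observation the margin $\tilde\epsilon$ need not exist. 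A second, smaller omission: you still need to extend the metric from the neighborhood of the graph to all of $T^2$ so that any loop leaving the neighborhood has length at least $\ell_k$; the paper does this with an interpolation between $\rho_1$ and a rescaled copy across nested collars, which also disposes of your smoothing worry without any $o(1)$ loss.
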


\noindent
Hence, any strictly convex norm on $\R^2$ can be approximated uniformly on compact sets by a stable norm 
that agrees with it on an arbitrarily large set of lines through the origin with rational slope. 
We now show that this result can be interpreted in terms of the minimum marked length spectrum of a torus.

First, we recall that the \emph{length spectrum} of a Riemannian manifold $(M,g)$ is the collection 
of lengths of all smoothly closed geodesics in $(M,g)$, where we adopt the convention that the multiplicity of 
a length $\ell$ is counted according to the number of free homotopy classes containing a 
geodesic of that length. 
Now, given a loop $\sigma$ on a manifold $M$ its \emph{unoriented free homotopy class} 
is the collection of closed geodesics that are freely homotopic to $\sigma$ or 
its inverse $\overline{\sigma}$. 
We will denote the collection of the unoriented free homotopy classes by $\mathcal{F}(M)$
and let $\pi: \pi_1(M) \to \mF(M)$ denote the natural projection\label{def:NatProj}. 
We then define the \emph{minimum length spectrum} to be the (possibly finite) sequence 
$\ell_1 = 0 < \ell_1\leq \ell_2 \leq \cdots $ consisting of the lengths of closed geodesics 
that are shortest in their \emph{unoriented} free homotopy class, where a length $\ell$ is 
repeated according to the number of unoriented free homotopy classes whose shortest 
geodesic is of length $\ell$.  
If we wish to keep track of the unoriented free homotopy classes we then 
consider the map $m_g: \mathcal{F}(M) \to \R$ which assigns to each unoriented 
free homotopy class the length of its shortest closed geodesic.
We will refer to $m_g$ or the collection $\{(m_g(\alpha), \alpha): \alpha \in \mathcal{F}(M)\}$ as 
the \emph{minimum marked length spectrum} of $(M,g)$ (see \cite[Def. 2.8]{DGS}).

It is natural to ask which pairs $(\ell, \alpha)$ consisting of a nonnegative number $\ell$ and 
an unoriented free homotopy class $\alpha$ can occur as part of the mimium marked length spectrum 
associated to some metric $g$ on $M$. This question was addressed in dimension three and higher 
by the third author, along with De Smit and Gornet, in \cite{DGS} where the following was shown.

\begin{theorem}[\cite{DGS} Theorem 2.9]\label{thm:DGS}
Suppose that $M$ is a closed connected manifold of dimension at least three.
Let $\alpha = ( \alpha_1, \alpha_2, \ldots, \alpha_k )$ be 
a sequence of
distinct elements of $\mF(M)$ where $\alpha_1$ is trivial.
Then for every sequence $0 = \ell_1 < \ell_2 \leq \cdots \leq \ell_k$
of real numbers the following are equivalent:
\begin{enumerate}[{\bf (i)}]
\item The sequence $\ell_1,\ldots, \ell_k$ is $\alpha$-admissible; 
that is, for $i, j = 2, \ldots , k$ $\ell_i \leq |n| \ell_j$, whenever
$\alpha_i = \alpha_j^n$ for some $n \in \Z$ and for $i = 2, \ldots , k$ $\ell_i \geq \frac{1}{|n|} \ell_k$ whenever 
$\alpha_i^n \not\in \{\alpha_1, \ldots , \alpha_k \}$ for some $n \neq 0 \in \Z$.
\item There is a Riemannian metric $g$ on $M$ such that 
the minimum marked length spectrum $m_g : \mF(M) \to \R_{\geq 0}$ satisfies 
$m_g(\alpha_i) = \ell_i$ for all $i$ and $m_g(\alpha) \geq \ell_k$ for all 
$\alpha \in \mF(M)-\{\alpha_1, \ldots , \alpha_k \}$.

\end{enumerate}
In particular, there is a metric $g$ on $M$ such that the systole is achieved
in the unoriented free homotopy class $\alpha_2$. 
\end{theorem}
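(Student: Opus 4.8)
\emph{Proof strategy.} The equivalence has an easy half and a construction. The plan is to dispose of (ii)$\Rightarrow$(i) first. Assume $g$ realizes the prescribed data and let $\gamma_j$ be a shortest closed geodesic in the class $\alpha_j$, so $L_g(\gamma_j)=\ell_j$. If $\alpha_i=\alpha_j^n$, then $\gamma_j$ traversed $|n|$ times is a closed curve in the class $\alpha_i$, whence $\ell_i=m_g(\alpha_i)\le |n|\ell_j$; this is the first admissibility condition. If instead $\alpha_i^n\notin\{\alpha_1,\dots,\alpha_k\}$ with $n\ne0$, then $m_g(\alpha_i^n)\ge\ell_k$ by hypothesis while $\gamma_i$ traversed $|n|$ times gives $m_g(\alpha_i^n)\le|n|\ell_i$; hence $\ell_i\ge\ell_k/|n|$, the second condition.

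For (i)$\Rightarrow$(ii) I would build $g$ by a ``plugs in a dilated background'' construction. Because $\dim M\ge 3$, each nontrivial $\alpha_i$ ($2\le i\le k$) is represented by a smoothly embedded loop $\sigma_i$, and finitely many such loops, being of codimension at least two, may be taken pairwise disjoint; fix pairwise disjoint closed tubular neighborhoods $N_i$ and an auxiliary metric $g_0$ on $M$. For a large parameter $R$ to be fixed later, put $Rg_0$ on $M\setminus\bigcup_i\ii(N_i)$, and on each $N_i$ install a ``plug'': a warped-product metric $\phi^2\,(ds)^2+h_x$ on the disc-bundle $N_i$, with $s$ the core coordinate normalized so that $\sigma_i=\{x=0\}$ is a circle of length $\ell_i$, with $\phi\ge 1$ and $\phi\equiv1$ along $\sigma_i$, and arranged to equal $Rg_0$ near $\partial N_i$ so that the two pieces glue smoothly. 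Two features of the plug matter: $\sigma_i$ is a closed geodesic of length $\ell_i$, and the fibrewise retraction $r_i\colon N_i\to\sigma_i$ (a $\pi_1$-isomorphism) is $1$-Lipschitz onto the circle of length $\ell_i$ — on the warped part this is immediate from $\phi\ge1$, and on the part where $g=Rg_0$ it holds for $R$ large since $r_i$ is a fixed smooth map.

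Next I would fix $\delta>0$ so small that the closed $g_0$-$\delta$-neighborhoods $\widehat N_i\supseteq N_i$ remain pairwise disjoint and deformation retract onto $\sigma_i$, and extend each $r_i$ to a $1$-Lipschitz $\pi_1$-isomorphism $\widehat N_i\to\sigma_i$ (again using $R$ large on $\widehat N_i\setminus N_i$). Let $\tau$ be a shortest closed geodesic of $(M,g)$ in a nontrivial free homotopy class $\beta$. Either (A) $\tau\subseteq\widehat N_j$ for some $j$ — then $\beta=\alpha_j^m$ with $m\ne0$, and composing $\tau$ with the retraction onto the length-$\ell_j$ circle gives $L_g(\tau)\ge|m|\ell_j$ — or (B) $\tau$ has a point at $g_0$-distance more than $\delta$ from every $N_i$ (since the disjoint closed sets $\widehat N_i$ cannot contain the connected set $\tau$), and then following $\tau$ from that point until it first meets $\bigcup_iN_i$, or indefinitely if it never does, exhibits a subarc lying where $g=Rg_0$ of $g_0$-length at least $\min\{\delta,2\,\mathrm{inj}(M,g_0)\}$, so $L_g(\tau)\ge R\min\{\delta,2\,\mathrm{inj}(M,g_0)\}$. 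Fix $R$ making this last quantity exceed $\ell_k$. Applying this with $\beta=\alpha_i$: in case (A) the first admissibility condition gives $\ell_i\le|m|\ell_j$, and in case (B) one gets $L_g(\tau)>\ell_k\ge\ell_i$; either way $L_g(\tau)\ge\ell_i$, and as $\sigma_i$ is a length-$\ell_i$ loop in the class $\alpha_i$ we conclude $m_g(\alpha_i)=\ell_i$ (trivially also for $i=1$). Applying it with $\beta\notin\{\alpha_1,\dots,\alpha_k\}$: in case (A) the second admissibility condition gives $|m|\ell_j\ge\ell_k$, and case (B) gives $L_g(\tau)>\ell_k$; so $m_g(\beta)\ge\ell_k$. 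This is exactly (ii); the systole statement is the case $k=2$, where both admissibility conditions hold automatically for any nontrivial $\alpha_2$ and any $\ell_2>0$.

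The step I expect to be the main obstacle is the honest construction and analysis of the plug: one must produce, in every dimension $n\ge3$ and for a possibly non-orientable normal bundle, a smooth warped-product metric on $N_i$ that simultaneously makes $\sigma_i$ a length-$\ell_i$ closed geodesic, matches $Rg_0$ near $\partial N_i$, and supports a genuinely $1$-Lipschitz $\pi_1$-retraction onto $\sigma_i$ — so that every loop in $\widehat N_i$ of winding number $m$ truly has length at least $|m|\ell_i$ — while keeping the several ``$R$ large enough'' thresholds mutually consistent. It is precisely the disjointness afforded by $\dim M\ge3$ that makes distinct plugs non-interacting; in dimension two this fails, which is exactly why the torus case requires the separate development carried out in the rest of this paper.
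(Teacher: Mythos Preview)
The paper does not itself prove this theorem; it is quoted from \cite{DGS}, and the only hint given is that ``a finite collection of distinct unoriented free homotopy classes can be represented by pairwise disjoint simple closed curves'' in dimension at least three. Your outline is exactly this approach---disjoint embedded representatives in codimension $\ge 2$, tubular ``plugs'' in which the core is a closed geodesic of the prescribed length together with a $1$-Lipschitz retraction onto the core, and a dilated background metric outside---and the paper's Lemma~\ref{lem:TheMetric}, imported verbatim from \cite[Lemma~5.3]{DGS}, is precisely the gluing device your construction needs. So your proposal is correct and matches the cited source.

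Two minor fixes that do not affect the argument: in case~(B), when $\tau$ avoids every $N_i$ entirely, the correct lower bound on $L_{g_0}(\tau)$ is the systole of $(M,g_0)$ rather than $2\,\mathrm{inj}(M,g_0)$ (these need not coincide); and with the convention $g=Rg_0$ at the level of the metric tensor, lengths scale by $\sqrt{R}$, not $R$. In either formulation one simply chooses the dilation parameter large enough.
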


\noindent

The proof of Theorem~\ref{thm:DGS} depends on the fact that a finite collection of 
distinct unoriented free homotopy classes can be represented by pairwise disjoint simple closed curves. 
The fact that this does not hold in dimension two appears to make approaching 
this question for surfaces---the actual motivation behind this article---a more delicate matter. 
However, we note that among surfaces the torus enjoys some special properties. 
First, all free homotopy classes can be represented by a simple closed curve or an iterate of such a curve. 
Consequently, with respect to any metric, the shortest closed geodesic in a free homotopy class will be a simple 
closed curve if the class is primitive, or an iterate of a simple closed curve in the case of a non-primitive class. 
Secondly, it follows from the fact that $T^2$ is an aspherical surface that 
for any choice of smooth Riemannian metric $g$ and choice of non-trivial free homotopy class $[\beta]$, 
a closed geodesic of minimal length in $[\beta]$ will have a minimal number of self-intersections \cite{FHS}. 
In Section~\ref{sec:Construction}, these properties will be marshaled to prove 
Theorem~\ref{thm:StableNorm}{\bf(i)} which in conjunction with Bangert's Theorem~\ref{thm:StrictlyConvex}
gives the following statement concerning the minimum marked length spectrum of a $2$-torus.

\begin{theorem}\label{thm:MinLengthSpec}
Let $T^2$ be a $2$-torus and $\pi_1(T^2) \simeq \Z^2 \leq \R^2$ its fundamental group.
Now, let $\alpha= (\alpha_1, \ldots , \alpha_k)$ be a sequence of distinct unoriented free 
homotopy classes of $T^2$, where $\alpha_i$ is represented by $\pm (a_i,b_i) \in \Z^2$ and 
$\alpha_1 = (0,0)$ is trivial. Also, let 
$\ell_1 = 0 < \ell_2 \leq \cdots \leq \ell_k$ be a finite sequence. Then the following are equivalent:
\begin{enumerate}[{\bf (i)}]
\item There is a strictly convex norm $\| \cdot \|$ on $\R^2$ such that $\| (a_i, b_i) \| = \ell_i$ and 
$\|(a,b) \| \geq \ell_k$ for any $(a,b) \neq \pm ( a_1, b_1), \ldots , \pm (a_k, b_k)$.
\item There is a metric $g$ on $T^2$ such that the 
minimum marked length spectrum $m_g: \mF(T^2) \to \R_{\geq 0}$ satisfies 
$m_g(\alpha_i) = \ell_i$ for all $i = 1, \ldots, k$ and $m_g(\alpha) \geq \ell_k$ 
for all $\alpha \in \mF(T^2) - \{\alpha_1, \ldots , \alpha_k\}$. 
\end{enumerate}
\end{theorem}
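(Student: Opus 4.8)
The plan is to note that on a $2$-torus the minimum marked length spectrum \emph{is} the stable norm, restricted to lattice classes, so that Theorem~\ref{thm:MinLengthSpec} becomes a translation of Theorem~\ref{thm:StableNorm} and Bangert's Theorem~\ref{thm:StrictlyConvex} into the language of $\mF(T^2)$. First I would record the dictionary: since $\pi_1(T^2)\simeq\Z^2$ is abelian, the free homotopy classes of loops in $T^2$ are exactly the elements of $H_1(T^2;\Z)\simeq\Z^2$, the unoriented free homotopy classes are the pairs $\{h,-h\}$, and for any Riemannian metric $g$ the length of the shortest closed geodesic in the class of $h$ equals $f(h)$; as recalled just after Theorem~\ref{thm:StrictlyConvex}, $f(h)=\|h\|_s$ for every $h\in\Z^2$, and clearly $f(h)=f(-h)$. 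Hence the minimum marked length spectrum of $(T^2,g)$ is the function $\{h,-h\}\mapsto\|h\|_s$, i.e.\ the restriction of the stable norm $\Norm_s$ to $\Z^2/\{\pm1\}$.

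For \textbf{(ii)} $\Rightarrow$ \textbf{(i)} I would take $\Norm$ to be the stable norm $\Norm_s$ of the metric $g$ furnished by (ii). By Theorem~\ref{thm:StrictlyConvex} this is a strictly convex norm on $\R^2$, and by the dictionary above $\|(a_i,b_i)\|_s=m_g(\alpha_i)=\ell_i$ for each $i$, while for any $(a,b)\in\Z^2$ not of the form $\pm(a_i,b_i)$ the pair $\{(a,b),-(a,b)\}$ is a class $\alpha\in\mF(T^2)\setminus\{\alpha_1,\dots,\alpha_k\}$, so $\|(a,b)\|_s=m_g(\alpha)\geq\ell_k$. This is precisely (i).

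For \textbf{(i)} $\Rightarrow$ \textbf{(ii)} I would start from a strictly convex norm $\Norm$ as in (i), set $\Norm_\infty=\Norm$, and let $P=\{(0,0)\}\cup\{\pm(a_i,b_i):2\leq i\leq k\}$, a set of $K:=2k-1$ distinct lattice points. Condition (i) gives the separation $\|p\|_\infty\leq\ell_k$ for every $p\in P$ and $\|(a,b)\|_\infty\geq\ell_k$ for every $(a,b)\in\Z^2\setminus P$, so $\Z^2$ admits an enumeration $\langle h_j\rangle_{j\geq1}$ with $\|h_j\|_\infty$ non-decreasing whose first $K$ terms are exactly the points of $P$, arranged so that $\|h_K\|_\infty=\ell_k$ (which is possible since $(a_k,b_k)\in P$ has norm $\ell_k=\max_{p\in P}\|p\|_\infty$, and ties at level $\ell_k$ are broken in favor of $P$). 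Applying Theorem~\ref{thm:StableNorm} to $\Norm_\infty$ with this enumeration yields a sequence $\langle\Norm_j\rangle$ of toral stable norms; I would select $\Norm_K$, so that there is a Riemannian metric $g$ on $T^2$ whose stable norm is $\Norm_K$, and part \textbf{(i)} of that theorem (read at index $K$) gives $\|h_j\|_K=\|h_j\|_\infty$ for $j\leq K$ and $\|h_j\|_K\geq\|h_K\|_\infty=\ell_k$ for $j\geq K+1$. Reading this back through the dictionary, $m_g(\alpha_i)=\|(a_i,b_i)\|_K=\ell_i$ for all $i$ and $m_g(\alpha)\geq\ell_k$ for every $\alpha\in\mF(T^2)\setminus\{\alpha_1,\dots,\alpha_k\}$, which is (ii).

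The genuinely hard part is already isolated in Theorem~\ref{thm:StableNorm}\textbf{(i)}, proved in Section~\ref{sec:Construction}; granting it, the remaining work is bookkeeping. The two points to handle with a little care are the legitimacy of the enumeration in the $\Rightarrow$ direction — which rests entirely on the inequality $\max_{p\in P}\|p\|_\infty\leq\ell_k\leq\min_{(a,b)\notin P}\|(a,b)\|_\infty$ and on resolving ties at level $\ell_k$ in favor of $P$ — and the identification of $m_g$ with $\Norm_s$ restricted to $\Z^2/\{\pm1\}$, which uses that on $T^2$ every free homotopy class is represented by a simple closed curve or an iterate of one so that $f(h)$ really is the minimum of geodesic lengths in the class $h$. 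The case $k=1$ is trivial on both sides and may be disposed of separately.
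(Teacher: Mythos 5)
Your proposal is correct and follows essentially the same route as the paper: identify $m_g$ with the restriction of the stable norm to $\Z^2/\{\pm 1\}$ via the Hurewicz isomorphism, deduce \textbf{(ii)}$\Rightarrow$\textbf{(i)} from Bangert's strict convexity (Theorem~\ref{thm:StrictlyConvex}), and \textbf{(i)}$\Rightarrow$\textbf{(ii)} from Theorem~\ref{thm:StableNorm}\textbf{(i)}. The paper's own proof is a two-sentence remark asserting this equivalence; you have merely supplied the bookkeeping (the dictionary between $\mF(T^2)$ and $\Z^2$, and the admissible non-decreasing enumeration with ties at level $\ell_k$ broken in favor of $P$) that the paper leaves implicit.
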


In Section~\ref{sec:Multiplicity} we consider the multiplicities in the minimum length spectrum of a $2$-torus.
By using results  concerning the minimum number of lattice points in the interior of an $n$-gon
and Theorem~\ref{thm:StableNorm}{\bf(i)} we obtain the following estimate on the ``location'' of a length 
with a specified multiplicity.

\begin{theorem}\label{thm:Multiplicity}
Suppose $(T^2, g)$ is a torus for which the minimum length spectrum $\langle \ell_j \rangle_{j=1}^{\infty}$
 has a length of multiplicity $m$. That is, for some $n \in \N$ we have $0 = \ell_1 \leq \ell_n < \ell_{n+1} = \cdots = \ell_{n+m} < \ell_{n+m+1}$. 
Then $n = m_g^{-1}([0, \ell))= f(m) \equiv \frac{i_0^{\rm{symm}}(2m) +1}{2} \geq O(m^3)$, where 
$i_0^{\rm{symm}}(2m)$ is the minimum number of integer points in the interior of 
a convex integer $2m$-gon that is centrally symmetric with respect to $(0,0)$.
Furthermore, this inequality is sharp. That is, for each $m \in \N$ there is a smooth metric $g$ on $T^2$
and $\ell > 0$ such that $\ell$ has multiplicity $m$ in the minimum length spectrum and 
$m_g^{-1}([0,\ell)) = f(m)$. 
\end{theorem}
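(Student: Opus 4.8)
\emph{Proof plan.} The strategy is to convert the statement into a counting problem for the integer points of the stable-norm ball, dispatch the inequality by an elementary comparison, and obtain sharpness by feeding an extremal lattice polygon into Theorem~\ref{thm:StableNorm}. First I would reformulate. On a $2$-torus the minimum marked length spectrum \emph{is} the stable norm: for $(a,b)\neq(0,0)$ the shortest geodesic in the unoriented free homotopy class $\pm(a,b)$ has length $\|(a,b)\|_s$. Fix a metric $g$ realizing the hypothesis, put $\ell=\ell_{n+1}$, and let $P=\{v\in\R^2:\|v\|_s\le\ell\}$ be the corresponding closed metric ball. By Bangert's Theorem~\ref{thm:StrictlyConvex}, $\|\cdot\|_s$ is strictly convex, so $P$ is a strictly convex body, centrally symmetric about $0$. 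Saying that $\ell$ has multiplicity $m$ is precisely saying that $\partial P\cap\Z^2$ consists of $2m$ points, occurring in $m$ antipodal pairs; and $n=\#\{\alpha\in\mF(T^2):m_g(\alpha)<\ell\}=1+\tfrac12\,\#\big((\Z^2\setminus\{0\})\cap\operatorname{int}P\big)=\tfrac{I+1}{2}$, where $I:=\#(\Z^2\cap\operatorname{int}P)$; note $I$ is odd since the only lattice point fixed by $v\mapsto-v$ is the origin, so $n$ is genuinely an integer.

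For the lower bound: strict convexity forces every point of $\partial P\cap\Z^2$ to be an extreme point of $P$, so the convex hull $Q$ of these $2m$ points is a centrally symmetric convex integer $2m$-gon with those $2m$ points as its vertices and $0\in\operatorname{int}Q$. Since $Q\subseteq P$, we get $\Z^2\cap\operatorname{int}Q\subseteq\Z^2\cap\operatorname{int}P$, whence $I\ge\#(\Z^2\cap\operatorname{int}Q)\ge i_0^{\rm symm}(2m)$ by the very definition of $i_0^{\rm symm}$; therefore $n=\tfrac{I+1}{2}\ge\tfrac{i_0^{\rm symm}(2m)+1}{2}=f(m)$. That $f(m)$ grows at least like $m^3$ is then the lattice-polygon analogue of Andrews' theorem (a convex lattice $k$-gon has diameter $\gtrsim k^{3/2}$, hence area $\gtrsim k^3$, hence $\gtrsim k^3$ interior lattice points) applied with $k=2m$, together with the cited results on lattice points in convex lattice polygons.

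For sharpness I would fix $m$, choose a centrally symmetric convex integer $2m$-gon $Q_0$ attaining the minimum $i_0^{\rm symm}(2m)$ whose edge vectors are \emph{primitive}, and then build a strictly convex norm $\Norm_\infty$ whose unit ball $B$ is obtained by replacing each edge of $Q_0$ by a strictly convex arc through its two endpoints that remains within distance $\varepsilon$ of that edge. Primitivity is exactly what guarantees that, for $\varepsilon$ small, $\partial B\cap\Z^2$ is the $2m$ vertices of $Q_0$ and $\operatorname{int}B\cap\Z^2=\operatorname{int}Q_0\cap\Z^2$, of size $i_0^{\rm symm}(2m)$. Applying Theorem~\ref{thm:StableNorm} to $\Norm_\infty$ yields toral stable norms $\Norm_j\to\Norm_\infty$; choosing $j$ large enough that (by Theorem~\ref{thm:StableNorm}{\bf(i)}) $\Norm_j$ agrees with $\Norm_\infty$ on every lattice point of $\Norm_\infty$-length at most $1$ while $\|(a_j,b_j)\|_\infty>1$, every remaining lattice point acquires $\Norm_j$-length $>1$. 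Any Riemannian metric $g$ on $T^2$ with stable norm $\Norm_j$ then has a length $\ell=1$ of multiplicity exactly $m$ in its minimum length spectrum and satisfies $m_g^{-1}([0,1))=\tfrac{i_0^{\rm symm}(2m)+1}{2}=f(m)$, which is the asserted sharpness.

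\textbf{Main obstacle.} The delicate input is the one used at the outset of the sharpness argument: that the minimum $i_0^{\rm symm}(2m)$ is attained by a centrally symmetric $2m$-gon with primitive edges, equivalently that imposing primitivity of the edges does not raise the minimal interior-point count. This is what makes the rounding step work: if an extremal polygon had a lattice point in the relative interior of an edge, that point would be thrown into $\operatorname{int}B$ after rounding (by strict convexity) and would break the equality $I=i_0^{\rm symm}(2m)$. I expect this to follow from the structure of the extremal polygons in the cited literature---the standard constructions, assembled from consecutive Farey/Stern--Brocot fractions, have primitive edges automatically---or else to require a short separate edge-refinement argument. The remaining work---realizing a prescribed strictly convex norm closely enough by a stable norm via Theorem~\ref{thm:StableNorm}, and verifying that no far-away lattice point corrupts the multiplicity count---is routine bookkeeping.
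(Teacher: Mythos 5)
Your proposal is correct and follows essentially the same route as the paper: reformulate multiplicities as lattice points on the boundary of the stable-norm ball, compare with the extremal centrally symmetric lattice $2m$-gon for the lower bound, and obtain sharpness by rounding such a polygon into a strictly convex norm and invoking Theorem~\ref{thm:StableNorm}. The ``main obstacle'' you flag (extra lattice points on the edges of the extremal polygon) is handled in the paper by the observation that one may inscribe a centrally symmetric $2m$-gon meeting $\Z^2$ only at its vertices without increasing the interior count, which matches your proposed fix.
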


Our study of the multiplicities of the minimum length spectrum of a torus  
is motivated in part by the study of hyperbolic surfaces; especially, hyperbolic punctured tori. The length spectrum of a hyperbolic surface always contains lengths of arbitrarily high multiplicity \cite{Ran}, and any closed geodesic is of minimal length on a hyperbolic surface. Unlike the case of smooth tori, hyperbolic surfaces contain non-simple closed geodesics which are thus minimal in their homotopy class, and it is among these geodesics that high multiplicities are known to appear. To date, multiplicities have not been observed among the simple closed geodesics and it is a conjecture of Schmutz Schaller that among primitive simple closed geodesics on a once-punctured torus 
the multiplicity of a given length is bounded by $6$. 
This conjecture is a specific case of a more general conjecture, due to Rivin, 
asserting that multiplicity in the \emph{simple length spectrum}---the collection 
of lengths of simple closed geodesics---is always bounded by a constant 
that only depends on the underlying topology (see \cite[p. 209]{Schmutz}).

Presently, not much is known about the validity of the conjectures of Schmutz Schaller and Rivin. However, Theorem~\ref{thm:Multiplicity} gives new examples demonstrating that these conjectures do not hold for arbitrary surfaces; in particular, tori (cf. \cite[p. 1884-5]{MP}). We note that Theorem~\ref{thm:Multiplicity} can be used to relate the multiplicity of the length $\ell$ to its position in the simple length spectrum of a one-holed or once-punctured torus.

\begin{corollary}\label{cor:HypMult} If there are $m$ simple closed geodesics of the 
same length $\ell$ on a once-punctured (or one-holed) torus, then there are at least 
$f(m)$ distinct simple closed geodesics of length strictly less than $\ell$.
\end{corollary}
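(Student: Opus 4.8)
The plan is to pass to first homology and then invoke Theorem~\ref{thm:Multiplicity}. For a hyperbolic once-punctured (resp.\ one-holed) torus $X$, recall the classical dictionary: the essential non-peripheral simple closed curves are in bijection with the primitive vectors of $H_1(X;\Z)\cong\Z^2$ taken up to sign (equivalently, with slopes in $\Q\cup\{\infty\}$), and each such class carries a unique simple closed geodesic, whose length I denote $\lambda_X(v)$. Under this dictionary the hypothesis says that $\lambda_X(v)=\ell$ for exactly $2m$ primitive vectors, which I label $v_1,\dots,v_m,-v_1,\dots,-v_m$, and the conclusion becomes the inequality $\#\{v\in\Z^2 \text{ primitive}:\lambda_X(v)<\ell\}\ge 2f(m)$.

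The geometric heart of the matter is the exact analogue, for the simple length spectrum of $X$, of the strict-convexity argument reproduced just after Theorem~\ref{thm:StrictlyConvex}. If $v,w$ are primitive with $\det(v,w)=\pm1$, then the geodesics $\gamma_v,\gamma_w$ meet in a single transverse point, so $\gamma_v\ast\gamma_w$ is a non-smooth loop freely homotopic to the simple closed curve of class $v+w$; hence $\lambda_X(v+w)<\lambda_X(v)+\lambda_X(w)$, and symmetrically for $v-w$. Propagating this down the Stern--Brocot tree, and using that only finitely many simple closed geodesics lie below any given length, I would show that the Minkowski functional $\Norm_X$ of the closed convex hull of $\{\,v/\lambda_X(v):v \text{ primitive}\,\}$ is a strictly convex norm on $\R^2$ with $\Norm_X(v)=\lambda_X(v)$ for every primitive $v$ of $\Norm_X$-length at most $\ell$. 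In particular the $2m$ critical vectors are then the vertices of a centrally symmetric convex integral $2m$-gon $P$ lying inside the $\Norm_X$-ball of radius $\ell$, so by the very definition of $i_0^{\mathrm{symm}}(2m)$ its interior contains at least $i_0^{\mathrm{symm}}(2m)$ lattice points, each of $\Norm_X$-length --- hence, for the primitive ones, of $\lambda_X$-length --- strictly less than $\ell$.

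To reach the constant $f(m)=\tfrac12\bigl(i_0^{\mathrm{symm}}(2m)+1\bigr)$ I would run the bookkeeping that underlies Theorem~\ref{thm:Multiplicity}: feed $\Norm_X$ into Theorem~\ref{thm:StableNorm} to obtain a toral stable norm $\Norm_k$ agreeing with $\Norm_X$ on all lattice points of $\Norm_X$-length $\le\ell$; the smooth torus realizing $\Norm_k$ then has a length of multiplicity at least $m$ at $\ell$, so Theorem~\ref{thm:Multiplicity} and the monotonicity of $f$ produce at least $f(m)$ free homotopy classes of length $<\ell$. Translating these back through the dictionary and $\Norm_X$ yields simple closed geodesics of $X$ of length $<\ell$.

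The step I expect to be the main obstacle is exactly this last translation, together with verifying that $\Norm_X$ really does capture $\lambda_X$ on the short classes. Theorem~\ref{thm:Multiplicity} counts \emph{all} free homotopy classes of the auxiliary torus --- iterates of short geodesics included --- whereas the corollary counts only the primitive classes, which are the simple closed geodesics of $X$; one must therefore control the imprimitive interior lattice points of $P$ by the primitive ones (pinning down that the extremal centrally symmetric $2m$-gons can be taken with all interior lattice points primitive), and one must ensure that $\lambda_X$ itself, not merely its convexification $\Norm_X$, is strictly below $\ell$ on the relevant primitive classes --- i.e.\ that none of the short simple closed geodesics of $X$ is strictly dominated in the convex-hull construction, which in particular forces the $2m$ critical vectors into convex position.
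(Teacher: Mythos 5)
Your route is genuinely different from the paper's, and its load-bearing step is left as an acknowledged ``obstacle'' rather than proved. Everything rests on the claim that the Minkowski functional of the convex hull of the points $v/\lambda_X(v)$ is a strictly convex norm that returns $\lambda_X(v)$ exactly on every primitive $v$ of length at most $\ell$. The Farey inequality $\lambda_X(v+w)<\lambda_X(v)+\lambda_X(w)$ for $\det(v,w)=\pm1$ is correct, but it does not by itself rule out that some $v_i/\lambda_X(v_i)$ fails to be an extreme point of the hull (in which case your polygon $P$ degenerates and the interior--lattice--point count collapses), nor does it give strict convexity of the resulting ball, which you need even to conclude that lattice points interior to $P$ have norm strictly less than $\ell$. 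This is essentially the McShane--Rivin norm together with extra extremality and strict-convexity properties, none of which is established in your write-up. The second gap you flag --- that $f(m)$ counts the trivial class and imprimitive classes, which are not simple closed geodesics of $X$ --- is real, but your proposed fix (extremal centrally symmetric $2m$-gons with all interior lattice points primitive) cannot work: the origin is always an interior lattice point and is never primitive. The paper resolves this by fiat in the remark immediately following the corollary, which states explicitly that the count includes the trivial class and non-primitive classes, so the statement is to be read in that loose sense; your version would need the same caveat.

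The paper's own proof sidesteps all of this with one geometric move: cap off the boundary geodesic (or a horocyclic neighborhood of the cusp, after truncation) with a Euclidean hemisphere to obtain a closed smooth torus. A length-minimizing loop in any free homotopy class cannot cross the hemisphere, since replacing a crossing arc by the shortest equator path between its endpoints yields a competitor that is no longer but is non-smooth, hence not minimal; therefore the minimum marked length spectrum of the capped torus is computed entirely inside the hyperbolic part and, on primitive classes, coincides with $\lambda_X$. Bangert's theorem applied to the capped torus then supplies for free the strict convexity you are trying to build by hand, and Theorem~\ref{thm:Multiplicity} applies verbatim. To salvage your approach you would have to prove (or cite) the extremality and strict-convexity properties of your norm --- a nontrivial result in its own right --- and restate the conclusion so that it counts what $f(m)$ actually counts.
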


\noindent
Unlike the conjectures of Schmutz Schaller and Rivin, the geodesics considered in Corollary~\ref{cor:HypMult}
include geodesics representing non-primitive classes and the function $f(m)$ counts the trivial 
homology/homotopy class.
Of course if Rivin's conjecture is correct, then Corollary~\ref{cor:HypMult} might only 
be of interest for small values of $m$.

\section{Constructing the Stable Norms: the Proofs of 
Theorems~\ref{thm:StableNorm}{\bf (i)} and \ref{thm:MinLengthSpec} }\label{sec:Construction}

In this section we will prove Theorems~\ref{thm:StableNorm}{\bf(i)} and \ref{thm:MinLengthSpec}.
The basic idea behind the proof of Theorem~\ref{thm:StableNorm}{\bf(i)} is 
to isolate geodesics $\gamma_1, \gamma_2, \ldots , \gamma_k$ 
on a flat torus $(T^2, g_0)$, with a systole of at least $\ell_k$, 
representing the $k$ homology classes $h_1, h_2, \ldots , h_k \in H_1(T^2,\Z)$ in the 
statement of the Theorem and then dig deep ``canyons'' with narrow ``corridors'' of the appropriate length 
along these geodesics in order to obtain a new metric $g_k$ for 
which the conclusions of the theorem are obtained. 
Theorem~\ref{thm:MinLengthSpec} will then follow as an application of 
Theorem~\ref{thm:StableNorm}{\bf(i)} and Bangert's result that 
the stable norm of a metric on a $2$-torus is strictly convex. 

\subsection{The proof of Theorem~\ref{thm:StableNorm}(i)} 
Let $\Norm_\infty$ denote a fixed strictly convex norm on $H_1(T^2, \R) \simeq \R^2$ and 
let $\langle h_i = (a_i, b_i) \rangle_{i = 1}^{\infty}$ denote a fixed enumeration of the integral 
homology classes $H_1(T^2, \Z) \simeq \Z^2$ with the property that $\|h_i\|_\infty \leq \|h_{i+1}\|_\infty$ for each $i \in \N$.
In this section we wish to show that for each $k \in \N$ we may find a toral stable norm $\Norm_k$ such that 
$\|h_i\|_k = \|h_i\|_\infty$ for $1\leq i \leq k$, while $\|h_j\|_k \geq \|h_k\|_\infty$ for each $j \geq k+1$. 
We begin by fixing some notation and assumptions that will hold throughout this section.\\

\begin{notarem}\label{notarem:Assumptions} \text{}
\begin{enumerate}[1.]
\item For any Riemannian metric $g$ on $T^2$ we let $L_g$ denote the length functional on the loop space and 
we let $d_g$ be the distance function in the induced metric space structure.

\item For any loop $\sigma: S^1 \to T^2$ we will let $\Image(\sigma)$ denote the image of $\sigma$ 
and we wil let $h_\sigma \equiv (a_\sigma, b_\sigma) \in H_1(T^2; \Z) \simeq \Z^2$ denote its homology class.

\item For each $i \in \N$ we will let $\ell_i = \|h_i\|_\infty$.

\item We will say that a homology class $h \in H_1(T^2; \Z)$ is \emph{primitive} if whenever $h = n \widetilde{h}$, for some $n \in \N$ and $\widetilde{h} \in H_1(T^2; \Z)$, we have $n =1$ and $\widetilde{h} = h$.

\item Since for any norm $\Norm$ on a real vector space $\mathcal{V}$ we have $\|rv\| = |r|\|v\|$, 
where $r \in \R$ and $v\in \mathcal{V}$, we may assume without loss of generality that 
each $h_i =(a_i, b_i) \in H_1(T^2; \Z)$ is a primitive homology class and that for $i \neq j$ we have $h_i \neq \pm h_j$. 

\item When convenient we will identify a homology class $h \in H_1(T^2; \Z)$ with 
the free homotopy class $\Psi^{-1}(h)$ given by the Hurewicz isomorphism $\Psi: \pi_1(T^2, p_o) \to H_1(T^2; \Z)$, where $p_0$ is some fixed point in $T^2$.

\item We will let $g_0$ denote a fixed \emph{flat} metric on $T^2$ with systole satisfying $\Syst(T^2, g_0) \geq \ell_k$ and set $B \equiv \Syst(T^2, g_0)$.

\item For each $i \in \N$ we will let $\gamma_i$ be the unique geodesic in $(T^2, g_0)$ passing through $p_0$ and representing the primitive homology class $h_i$. We note that since $h_1 = (0,0)$ represents the trivial class, the geodesics $\gamma_1$ is trivial.

\item\label{thm:Rephrased} Theorem~\ref{thm:StableNorm}{\bf(i)} is then equivalent to showing that for each $k \in \N$ there is a metric $g_k$
such that 
\begin{enumerate}
\item $L_{g_k}(\gamma_i) = \ell_i$,
\item for any loop $\sigma$ in $(T^2, g_k)$, representing one of the (primitive) homology classes $\{h_i\}_{i \in \N}$, we have 
$$L_{g_k}(\sigma) \geq \left\{ \begin{array}{ll}
\ell_i & h_\sigma = h_i \; \mbox{for some } i = 1, \ldots , k\\
\ell_k & \mbox{otherwise}
\end{array}
\right.
$$
\end{enumerate}

\item By a \emph{cycle} $c$ in a graph $\mG$ we will mean a sequence of vertices $\langle v_i \rangle_{i=0}^{q}$ such that 
$v_0 = v_q$ and for each $i = 0, 1, \ldots , q-1$ there is an edge $e_i$ joining $v_i$ and $v_{i+1}$.
The edge length of such a cycle is said to be $q$.

\item It is clear that if $(\mG, d)$ is a metric graph, then for any loop $\sigma : S^1 \to \mG$ 
there is a cycle $c$ that is freely homotopic to $\sigma$ in $\mG$ such that $L(\sigma) \geq L(c)$.
A cycle $c$ will be said to be \emph{minimal} if it is the shortest cycle in its free homotopy class. Clearly a minimal cycle will have minimal edge length among all other cycles in its free homotopy class. 

\end{enumerate}
\end{notarem}

\text{}\\

Fix $k \in \N$ and let $h_1= (a_1, b_1), \ldots , h_k = (a_k, b_k) \in H_1(T^2; \Z)$ be the first 
$k$ homology classes in our ordering.  
Since $T^2$ is a torus we see that for each $2 \leq i \neq j \leq k$ the geodesics $\gamma_i$ and $\gamma_j$ intersect transversally in 
finitely many points.
Consider the curves $\gamma_1, \gamma_2, \ldots , \gamma_k$ simultaneously 
and let $\{p_0, p_1, \ldots , p_t\}$ be the collection of intersection points. 
Then for each $i = 2, \ldots , k$ these points partition $\gamma_i$ into $m_i$ segments 
$\gamma_{i1}, \ldots , \gamma_{im_i}$, and since $g_0$ is a flat metric on $T^2$
one can deduce that the quantity $q_{ij} \equiv \frac{L_{g_0}(\gamma_{ij})}{L_{g_0}(\gamma_i)}$ 
is a positive rational number, for each $i = 2, \ldots , k$ and $1 \leq j \leq m_i$.
The union of the images of the geodesics $\gamma_1, \gamma_2, \ldots , \gamma_k$, which we 
will denote by $\mG$, forms a directed graph in $T^2$, where the points 
$\{p_0, p_1, \ldots , p_t\}$ are the vertices and the segments $\gamma_{ij}$ are the oriented edges.
Now suppose $\mT$ is a regular neighborhood of $\mG$ with 
smooth boundary in $T^2$ (see Figure~\ref{Fig:Tubular}). 
Then $\mT$ can be decomposed into $t+1$ disjoint ``hubs'' 
$\{\Delta_0, \Delta_1, \ldots , \Delta_t \}$ containing the vertices $\{p_0, p_1, \ldots , p_t \}$ and disjoint (rectangular) ``corridors'' $R_{ij}$ containing $\Image(\gamma_{ij}) - \cup_{s = 0}^{t} \Delta_s$ (see Figure~\ref{Fig:NGons}).
We now show that we can find a regular neighborhood $\mT$ of $\mG$ and 
a flat metric $\rho_1$ defined on $\mT$ such that Theorem~\ref{thm:StableNorm}{\bf (i)}---in the guise of \ref{notarem:Assumptions}(\ref{thm:Rephrased}) above---is true if we restrict our attention to loops 
contained in $(\mT, \rho_1)$. Specifically, we have the following lemma.

\begin{figure}
\includegraphics[width=2in]{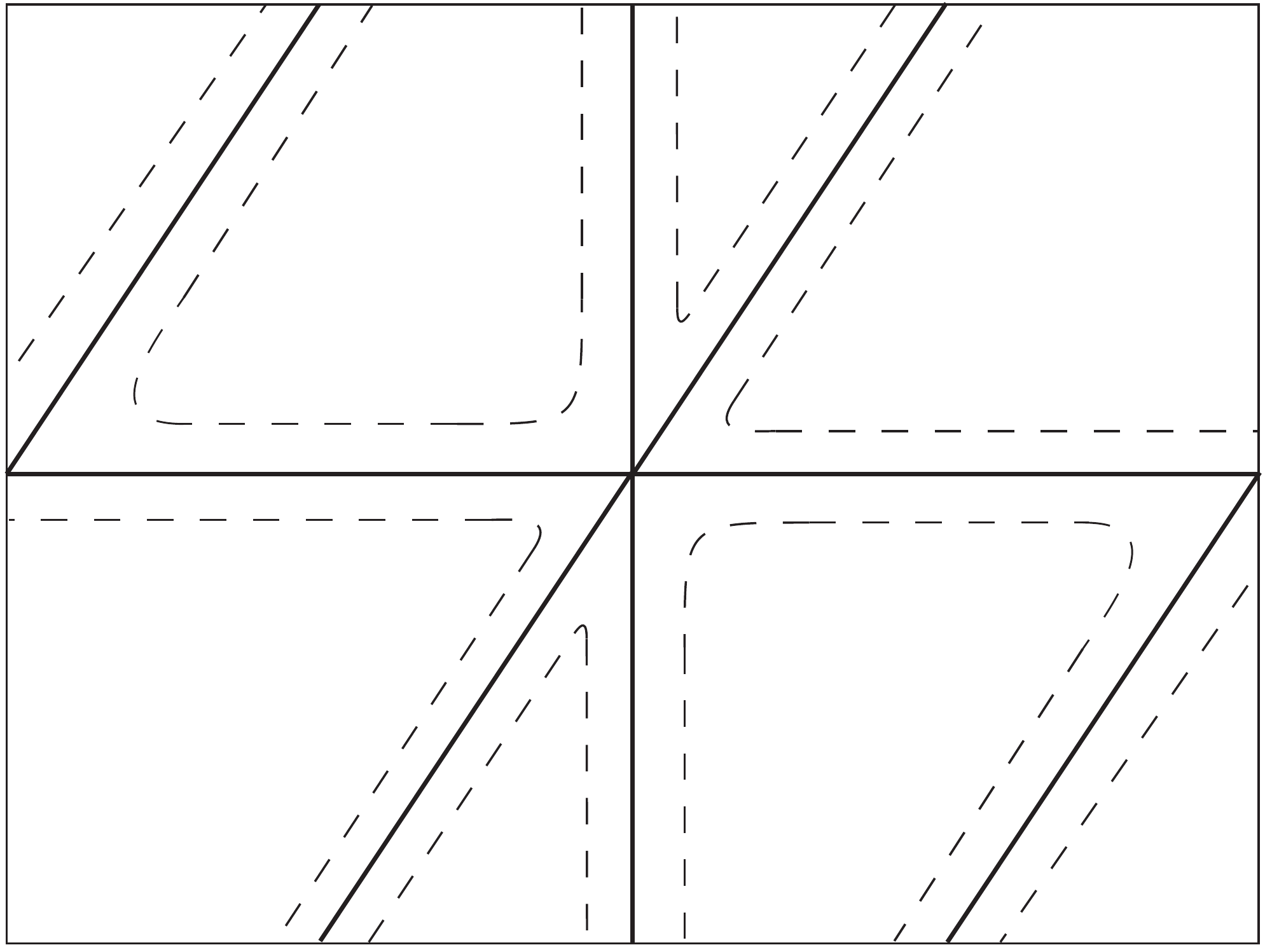}
\caption{Regular Neighborhood of $\mG$}
\label{Fig:Tubular}
\end{figure}

\begin{figure}
\includegraphics[width=2in]{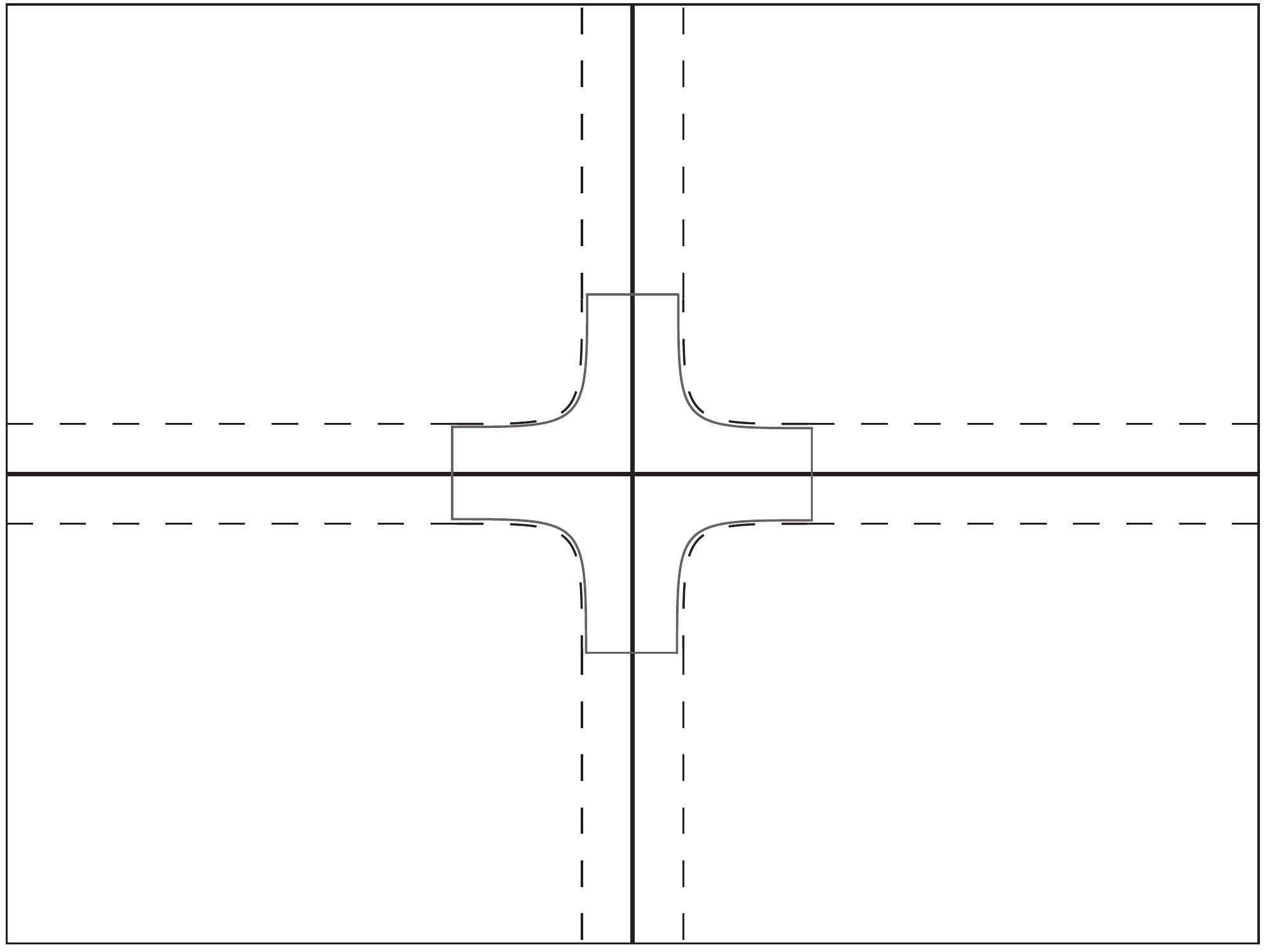}
\caption{Decomposition into ``hubs'' \& ``corridors''}
\label{Fig:NGons}
\end{figure}

\begin{lemma}\label{lem:TubNbhd}
With the notation as above, there is a regular neighborhood $\mT$ of $\mG$ 
with smooth boundary and a flat metric $\rho_1$ on $\mT$ with the following properties:
\begin{enumerate}[{\bf (1)}]
\item $L_{\rho_1}(\gamma_i) = \ell_i$ for $i = 1,2, \ldots , k$; 
\item if $\sigma$ is a loop in $\mT$ representing the (primitive) homology class 
$(a_\sigma, b_\sigma) \in H_1(T^2, \Z) \simeq \Z^2$, then 
$$L_{\rho_1}(\sigma) \geq \left\{ \begin{array}{ll}
\ell_i & (a_\sigma, b_\sigma) = \pm (a_i, b_i) \; \mbox{for some } i = 1, \ldots , k\\
\ell_k & \mbox{otherwise}
\end{array}
\right.
$$
\end{enumerate}

\end{lemma}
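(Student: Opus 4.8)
The plan is to build the metric $\rho_1$ on $\mathcal T$ essentially by prescribing lengths edge-by-edge in the graph $\mathcal G$, so that the combinatorics of cycles in $\mathcal G$ controls the $\rho_1$-length of loops. First I would choose the regular neighborhood $\mathcal T$ so that each hub $\Delta_s$ is a small flat disk (a polygon, as in Figure~\ref{Fig:NGons}) and each corridor $R_{ij}$ is a thin flat rectangle glued to the hubs at its two ends; the key point is that the widths of the hubs and corridors can be taken as small as we like, contributing a negligible amount to lengths, while the \emph{lengths} of the rectangular corridors $R_{ij}$ along the $\gamma_{ij}$-direction are parameters we get to set. I would set the length of the corridor $R_{ij}$ to be (approximately) $q_{ij}\,\ell_i$, where $q_{ij}=L_{g_0}(\gamma_{ij})/L_{g_0}(\gamma_i)$ is the rational proportion computed above, with a small correction so that the closed concatenation of corridors making up $\gamma_i$ has total $\rho_1$-length exactly $\ell_i$. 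This immediately gives property \textbf{(1)}. Because the $q_{ij}$ are rational and the $\ell_i$ may be irrational, I'd want to be slightly careful that these assignments are simultaneously consistent across all $i=2,\dots,k$ (the corridors are disjoint, one per edge, so there is no conflict — each edge $\gamma_{ij}$ belongs to exactly one curve $\gamma_i$), and absorb the tiny hub-crossing contributions into the correction.

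The substance is property \textbf{(2)}: a lower bound on $L_{\rho_1}(\sigma)$ for every loop $\sigma$ in $\mathcal T$ in terms of its homology class. Here I would invoke \ref{notarem:Assumptions}(10)--(11): any loop $\sigma$ in the metric graph-like space $\mathcal T$ is freely homotopic to a cycle $c$ in $\mathcal G$ with $L_{\rho_1}(\sigma)\ge L_{\rho_1}(c)$ — up to the negligible width terms, traversing $\mathcal T$ costs at least as much as traversing the spine $\mathcal G$. So it suffices to bound the $\rho_1$-length of cycles in $\mathcal G$. Assign to each oriented edge $\gamma_{ij}$ the ``weight'' $q_{ij}\ell_i$ (its corridor length), and to each cycle $c=\langle v_0,\dots,v_q\rangle$ the sum of the weights of its edges; then $L_{\rho_1}(c)$ equals this weight sum up to an error controlled by the hub widths times the edge-length $q$ of $c$. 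The homology class $(a_\sigma,b_\sigma)$ of $\sigma$ is the sum in $H_1(T^2;\Z)$ of the classes of the edges traversed. The heart of the argument is then the combinatorial claim: \emph{for any cycle $c$ in $\mathcal G$ representing a primitive class $h$, its weight is at least $\|h\|_\infty$ when $h=\pm h_i$ for some $i\le k$, and at least $\ell_k$ otherwise.} I would prove this by observing that the weighted graph $\mathcal G$ is, by construction, isometric (on its $1$-skeleton, up to rescaling each $\gamma_i$) to the union of the \emph{straight} geodesic segments $\gamma_{ij}$ on the flat torus $(T^2,g_0)$ after we rescale the metric along each $\gamma_i$ so that its total length becomes $\ell_i$ — more precisely, the weight of any edge path equals the $g_0'$-length of the corresponding broken geodesic for the piecewise-linear ``metric'' that makes $\gamma_i$ have length $\ell_i$. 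Since a cycle hitting only the single curve $\gamma_i$ and wrapping it once has weight exactly $\ell_i=\|h_i\|_\infty$, and any cycle that uses edges from two transverse curves $\gamma_i,\gamma_j$ must turn a genuine corner at a hub, a convexity/triangle-inequality comparison in the flat torus forces the weight of a cycle representing $h$ to be at least $\min\{\ell_i : h=\pm h_i,\ i\le k\}$, and at least $\ell_k$ for every other primitive class because $\langle h_i\rangle$ is ordered with $\|h_i\|_\infty$ nondecreasing and $\ell_k=\|h_k\|_\infty \le \|h_j\|_\infty$ for $j>k$ while the ``cheapest'' way to realize any homology class in $\mathcal G$ runs along one of the $\gamma_i$'s — I would make this last comparison rigorous by noting that the shortest representative of $h$ among broken paths in $\mathcal G\subset T^2$ is bounded below by the $g_0$-stable-norm distance, suitably rescaled, which dominates $\ell_k$.

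The main obstacle I anticipate is exactly this combinatorial lower bound for cycles that mix edges from several of the $\gamma_i$: one must rule out that by cleverly patching together short sub-arcs of $\gamma_2,\dots,\gamma_k$ one produces a cycle representing, say, $h_2$ that is \emph{shorter} than $\ell_2$, or a cycle representing some $h_j$ with $j>k$ shorter than $\ell_k$. The control comes from two facts established in the setup: (a) the hubs can be made arbitrarily small, so corner-turning is essentially free geometrically but still forces you to follow the straight edges of $\mathcal G$; and (b) on the flat torus the straight geodesics are globally length-minimizing in their homology classes, so any broken concatenation of sub-arcs of them, when closed up, has length (after our per-curve rescaling) at least the rescaled stable-norm length of the resulting class. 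I would therefore structure the proof of \textbf{(2)} as: (i) reduce $\sigma$ to a cycle $c$ in $\mathcal G$ up to $O(\text{width}\cdot\text{length})$ error; (ii) lift $c$ to a broken path in the flat torus and compare its $\rho_1$-length to a weighted edge sum; (iii) bound the weighted edge sum below using the minimizing property of the $\gamma_i$ on $(T^2,g_0)$ together with the ordering $\ell_1\le\ell_2\le\cdots$; (iv) choose the neighborhood widths small enough that the cumulative error is swallowed by the (strict, since $\ell_k<$ the relevant next value generically, and otherwise handled by a limiting/perturbation argument) gaps in the required inequalities. Steps (i) and (iv) are routine once (ii)–(iii) are in place; (iii) is where all the real work lies.
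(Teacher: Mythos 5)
Your overall architecture (prescribe corridor lengths $q_{ij}\ell_i$ to get \textbf{(1)}, reduce loops in $\mT$ to cycles in the spine $\mG$ up to a width error, bound the weighted edge sum of cycles below, then shrink the hubs so the error is absorbed) matches the paper's Steps A and C. But your step (iii) --- the actual lower bound on the weight of a cycle --- contains a genuine gap, and it is precisely the step you correctly identify as carrying all the real work. You propose to bound the weight of a mixed cycle from below by ``the $g_0$-stable-norm distance, suitably rescaled.'' This cannot work: the stable norm of the flat metric $g_0$ is a Euclidean norm, whereas $\Norm_\infty$ is an \emph{arbitrary} strictly convex norm, and the edge weights $q_{ij}\ell_i$ are obtained by rescaling each $\gamma_i$ by a \emph{different} factor $\ell_i/L_{g_0}(\gamma_i)$; there is no single metric on $T^2$ in which these weights are simultaneously realized as lengths, so minimality of straight geodesics in $(T^2,g_0)$ gives lower bounds in terms of Euclidean data that bear no relation to the target values $\ell_i=\|h_i\|_\infty$. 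Relatedly, your claim that ``the cheapest way to realize any homology class in $\mG$ runs along one of the $\gamma_i$'s'' is false for classes outside $\{\pm h_1,\dots,\pm h_k\}$, which are realized only by mixed cycles.

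What is actually needed --- and what the paper does --- is an algebraic identity plus the strict convexity of $\Norm_\infty$ itself (which you never invoke). One records the traversal multiplicities $n_{ij}^{\pm}(c)$ of a cycle $c$, clears denominators with an integer $N$ making all $Nq_{ij}$ integral, and shows via intersection numbers that $N(a_c,b_c)=\sum_{i,j}N(n_{ij}^{+}-n_{ij}^{-})q_{ij}(a_i,b_i)$, while $NL_{\rho_1}(c)=\sum_{i,j}N(n_{ij}^{+}+n_{ij}^{-})q_{ij}\|(a_i,b_i)\|_\infty$. The \emph{strict} triangle inequality for $\Norm_\infty$ applied to the first sum then yields $L_{\rho_1}(c)>\|(a_c,b_c)\|_\infty$ for every minimal cycle not a reparametrization of some $\gamma_i$; combined with the ordering $\ell_1\le\ell_2\le\cdots$ this gives both branches of \textbf{(2)} on the graph. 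Strictness is essential: without it there is no positive gap $\tilde\epsilon$ to absorb the hub error in your step (iv). Finally, to get a \emph{uniform} such gap you also need to restrict to the finitely many minimal cycles of bounded edge length (cycles with many edges are handled separately, since each corridor already costs at least a fixed $\zeta>0$, so $q$ corridors cost more than $\ell_k$ once $q>\ell_k/\zeta$); your ``choose widths small enough'' step silently quantifies over infinitely many cycles and needs this finiteness reduction to close.
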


\begin{proof}[Proof of Lemma~\ref{lem:TubNbhd}]

The proof of this lemma has three main steps:

\begin{enumerate}[(A)]
\item We take an arbitrary regular neighborhood $\mT'$ of $\mG$ with a particular choice of ``hubs'' $\{ \Delta_0, \Delta_1, \ldots , \Delta_t\}$ and corresponding rectangular ``corridors'' $\{R_{ij} : i = 2, \ldots , k, j = 1, \ldots , m_i \}$.
Then we adjust the length of the corridors to obtain a flat metric $\rho_1$ on $\mT'$ for which condition 
{\bf (1)} is satisfied and $L_{\rho_1}(\gamma_{ij}) = q_{ij}\ell_i$, where we recal that the $q_{ij}$'s are rational.

\item We use the strict convexity of the norm $\Norm_\infty$ and 
the fact that $L_{\rho_1}(\gamma_{ij}) = q_{ij}\ell_i$ for $q_{ij} \in \Q$ to show that for any 
minimal cycle $c$ in the length space $(\mG, \rho_1)$ that is not a reparametrization of 
$\gamma_1, \gamma_2, \ldots , \gamma_k$ the following strict inequality holds:
$$L_{\rho_1}(c) > \| (a_c, b_c) \|_{\infty}.$$
Then, since $\|(a_i, b_i)\|_\infty \leq \|(a_{i+1}, b_{i+1})\|_\infty$ for each $i$, we see that the lemma 
is true on the length space $(\mG, \rho_1)$.

\item We use the inequality from the previous step to 
obtain a constant $\Theta >0$ with the property that if $\mT \subset \mT'$ is a regular neighborhood 
of $\mG$ with ``hubs'' $\{\Delta_0, \Delta_1, \ldots , \Delta_t\}$ satisfying 
$$ \diam \Delta_s \leq \max_{x \in \partial \Delta_s} 2 \cdot d(p_s, x) \leq \Theta,$$
where the distance is computed with respect to $\rho_1$,
then the lemma holds on $(\mT, \rho_1)$.
\end{enumerate}
\text{}\\

\noindent
{\bf Step A:} Choose an arbitrary regular neighborhood $\mT'$ of $\mG$ in $T^2$ 
and a collection of ``hubs'' $\{\Delta_0', \Delta_1', \ldots , \Delta_t' \}$, with a corresponding collection of 
rectangular ``corridors'' $\{R_{ij}': 2 \leq i \leq k, 1\leq j \leq m_i\}$,
having the property that 
\begin{eqnarray}\label{eq:HubLength}
L_{g_0}(\Image(\gamma_{ij}) \cap \cup_{s = 0}^{t} \Delta_s) < \frac{1}{2} q_{ij} \ell_{i},
\end{eqnarray}
for each $2\leq i \leq k$, $1 \leq j \leq m_i$.
That is, each edge $\gamma_{ij}$ in our graph $\mG$ has less than $\frac{1}{2}q_{ij}\ell_i$ of its length 
contained in the ``hubs''.
Then by lengthening or shortening each $R_{ij}$ in the ``$\gamma_{ij}$-direction'' we 
obtain a new flat metric $\rho_1$ on $\mT'$
with respect to which we have $L_{\rho_1}(\gamma_{ij}) = q_{ij}\ell_i$ for $2 \leq i \leq k$ $1 \leq j \leq m_i$, 
and (recalling that $\gamma_1$ is trivial) we see that 
$L_{\rho_1}(\gamma_i) = \ell_i,$
for $i = 1, 2, \ldots , k$.
Hence, condition {\bf (1)} of the Lemma is obtained on $(\mT', \rho_1)$.
We note that the fact that $L_{\rho_1}(\gamma_{ij}) = q_{ij}\ell_i$ for each $i = 2, \ldots , k$ and $j = 1, \ldots , m_i$ will be exploited in Step B.\\

We now want to demonastrate that by picking a thin regular 
neighborhood $\mT \subset \mT'$  of $\mG$
where the ``hubs'' can be chosen of suffciently small diameter we can also obtain condition {\bf (2)}.
Towards this end we first show that the lemma is true on our metric graph $(\mG, \rho_1)$.
\text{}\\

\noindent
{\bf Step B:} 
Let $c$ be a minimal cycle in $(\mG, \rho_1)$ representing the homology class $(a_c, b_c) \in H_1(T^2, \Z)$.
Now, for each $i = 2, \ldots, k$ and $j = 1, \ldots , m_i$ we let $n_{ij}^{+}(c)$ (resp. $n_{ij}^{-}(c)$) 
denote the number of times $c$ traverses the edge $\gamma_{ij}$ in the positive direction (respectively, negative direction).
Then with respect to the metric $\rho_1$ the length of the curve $c$ is given by
\begin{eqnarray*} 
L_{\rho_1}(c) &=& \sum_{i=2}^{k}\sum_{j=1}^{m_i} (n_{ij}^{+}(c) + n_{ij}^{-}(c)) L_{\rho_1}(\gamma_{ij}) \\
&=& \sum_{i, j} (n_{ij}^{+}(c) + n_{ij}^{-}(c)) q_{ij} \ell_i \\
&=& \sum_{i, j} (n_{ij}^{+}(c) + n_{ij}^{-}(c)) q_{ij} \| (a_i, b_i) \|_\infty.
\end{eqnarray*}

Since each $q_{ij}$ is rational, we may fix $N \in \N$ so that $Nq_{ij}$ is an integer 
for each $i = 2, \ldots , k$ and $j = 1, \ldots , m_i$. 
Then $c^N$ represents the homology class $N(a_{c}, b_{c})$ and has length
$$L_{\rho_1}(c^N) = \sum_{i= 2}^{k} \sum_{j=1}^{m_i} N(n_{ij}^{+}(c) + n_{ij}^{-}(c)) q_{ij} \| (a_i, b_i) \|_{\infty}.$$
That is, each edge $\gamma_{ij}$ contributes $N(n_{ij}^{+}(c) + n_{ij}^{-}(c)) q_{ij} \| (a_i, b_i) \|_{\infty}$ towards the length of $c^N$.
Now, for each $i =2, \ldots, k$, let $N_i =  \sum_{j=1}^{m_i} N(n_{ij}^{+}(c) - n_{ij}^{-}(c)) q_{ij}$. 
Then $\delta_i \equiv \gamma_i^{N_i}$ is a curve in $\mG$ representing 
the homology class $N_i(a_i, b_i) \in H_1(T^2; \Z)$.
It then follows from the definition of the $n_{ij}^+(c)$'s and $n_{ij}^{-}(c)$'s that the curves $c^N$ and 
$\delta = \delta_{2} * \cdots *  \delta_{k}$ in $\mG$ have the same 
algebraic intersection number with a basis for $H_1(T^2; \Z)$. 
Therefore, since a homology class in $H_1(T^2; \Z)$ is determined by its algebraic intersection numbers with a basis for $H_1(T^2; \Z)$, we conclude that $c^N$ and $\delta$ are homologous, and we obtain the following expression for $N(a_c, b_c)$:
\begin{eqnarray*}
N(a_c, b_c) &=& h_{c^N} \\
		   &=& h_{\delta}\\
		   &=& \sum_{i = 2}^{k} N_i(a_i, b_i)\\
		   &=&  \sum_{i = 2}^{k}\sum_{j=1}^{m_i} N(n_{ij}^{+}(c) - n_{ij}^{-}(c)) q_{ij} (a_i, b_i).
\end{eqnarray*}
The strict convexity of the norm $\Norm_\infty$ allows us to obtain the following:
\begin{eqnarray*}
N\|(a_c, b_c)\|_{\infty} &=& \| N(a_c, b_c)\|_{\infty} \\
				   &=& \| \sum_{i=2}^{k}\sum_{j=1}^{m_i} N (n_{ij}^{+}(c) - n_{ij}^{-}(c)) q_{ij} (a_i, b_i)\|_{\infty} \\
				   &<&  \sum_{i=2}^{k}\sum_{j=1}^{m_i} N |(n_{ij}^{+}(c) - n_{ij}^{-}(c))| q_{ij} \|(a_i, b_i)\|_{\infty} \\
				   &\leq& \sum_{i=2}^{k}\sum_{j=1}^{m_i} N (n_{ij}^{+}(c) + n_{ij}^{-}(c)) q_{ij} \|(a_i, b_i)\|_{\infty} \\
				   &=& NL_{\rho_1}(c).
\end{eqnarray*}
Dividing through by $N$ in the inequality above we obtain
\begin{eqnarray}\label{eq:Inequality}
L_{\rho_1}(c) > \|(a_c, b_c)\|_{\infty}. 
\end{eqnarray}
\text{}\\

As it will be useful in the sequel, we pause to define the notions of hub length and corridor length 
for a loop $\sigma : S^1 \to (\widetilde{\mT}, \widetilde{\rho})$ in an arbitrary flat regular neighborhood of $\mG$. Let $(\widetilde{\mT}, \widetilde{\rho})$ be such a regular neighborhood with a corresponding 
choice of hubs $\{\widetilde{\Delta}_0, \widetilde{\Delta}_1, \ldots , \widetilde{\Delta}_s \}$
and rectangular corridors $\{ \widetilde{R}_{ij} : i = 2, \ldots , k, j = 1, 2, \ldots , m_i \}$. Then the \emph{corridor length} of $\sigma$ is defined to be:
$L_{\tilde{\rho}}^{\rm{hub}}(\sigma) \equiv L_{\tilde{\rho}}(\Image(\sigma) \cap \cup_{s=0}^{t}\widetilde{\Delta}_s).$
Similarly, the \emph{corridor length} of $\sigma$ is defined to be:
$L_{\tilde{\rho}}^{\rm{cor}}(\sigma) = L_{\tilde{\rho}}(\Image(\sigma) \cap \cup_{i,j} \widetilde{R}_{ij})$.
Due to the flatness of the metric $\tilde{\rho}$ and that the corridors are actually rectangles, it is 
clear that for any curve $\sigma$ in $(\tilde{\mT}, \tilde{\rho})$ freely homotopic in $\widetilde{\mT}$ to a minimal cycle $c_\sigma$ in $\mG$ that we have: 
\begin{eqnarray}\label{eq:CorridorLength}
L_{\tilde{\rho}}^{\rm{cor}}(\sigma) &\geq& L_{\tilde{\rho}}^{\rm{cor}}(c_\sigma).
\end{eqnarray}
\text{}\\

\noindent
{\bf Step C:} We are now in a position to explain how to pick our regular neighborhood $\mT \subset \mT'$. 
We begin by defining a particular collection of cycles in our graph $\mG$.

Let $\mC$ denote the collection of \emph{minimal} cycles $c$ in the length space $(\mG, \rho_1)$ with the following properties:

\begin{enumerate}[(1)]
\item $c$ is not freely homotopic to the cycles $\gamma_1, \gamma_2, \ldots , \gamma_k$ in $\mT'$.
(We note that this does not preclude $(a_c, b_c) = (a_i, b_i)$ for some $i =1, 2, \ldots, k$.);
\item $c$ consists of at most $\lfloor \frac{\ell_k}{\zeta} \rfloor$ edges, where 
$\zeta \equiv \frac{1}{2}  \min\{ q_{i1}\ell_i, \ldots , q_{im_i} \ell_i: i = 1, \ldots , k\}$ and 
$\lfloor x \rfloor$ denotes the greatest integer less than $x > 0$. 
(We note that it follows from Equation~\ref{eq:HubLength} and the manner in 
which the metric $\rho_1$ was constructed that the length of each rectangle 
$R_{ij}$ in the ``$\gamma_{ij}$-direction'' is greater than $\zeta$.)
\end{enumerate}
\noindent
It will prove to be useful to notice that the upper bound on edge length of elements of $\mC$ implies that $\mC$ 
is a finite collection. It now follows from Equation~\ref{eq:Inequality} that the quantity 
\begin{eqnarray}\label{eq:Epsilon}
\tilde{\epsilon} \equiv \min_{c \in \mC} (L_{\rho_1}(c) - \|(a_c, b_c)\|_\infty)
\end{eqnarray}
is positive.

Now let $\sigma$ be a curve in the tubular neighborhood $(\mT', \rho_1)$ that is freely homotopic in 
$\mT'$ to the minimal cycle $c_\sigma \in \mC$ of edge length $q \leq \lfloor \frac{\ell_k}{\zeta} \rfloor$. 
The edges of $c_\sigma$ determine $q$ corridors $R_1, \ldots , R_q$ 
through which it passes (counted with multiplicity). Then $\sigma$ must pass 
through these $q$ corridors. In fact, since we are ultimately interested in 
obtaining a lower bound on the length of $\sigma$, we may assume without loss of 
generality that $\sigma$ enter and leaves precisely these $q$ corridors (counting multiplicities) 
and no other corridors. 
As noted earlier, since $(\mT', \rho_1)$ is flat we see that $\Lcor(\sigma) \geq \Lcor(c_\sigma)$.
Hence, the only way that $\sigma$ can be shorter than $c_\sigma$ is to ``make up the difference'' inside the ``hubs''; 
that is, we need the quantity $\Lhub(c_\sigma) - \Lhub(\sigma)$ to be sufficiently large. But, since 
$\Lhub(c_\sigma)$ is bounded from above by 
$$q \cdot \max_{s = 0, 1, \ldots ,t} \max_{x \in \partial \Delta_s'} 2 \cdot d(p_s, x),$$
(where we recall that $p_s$ is the ``center'' of the hub $\Delta_s$)
we have the following crude universal upper bound on the amount any such $\sigma$ can save in 
the hubs compared with its corresponding minimal cycle $c_\sigma$: 
$$\Lhub(c_\sigma) - \Lhub(\sigma) \leq \lfloor \frac{\ell_k}{\zeta} \rfloor \cdot \max_{s = 0, 1, \ldots ,t} \max_{x \in \partial \Delta_s'} 2 \cdot d(p_s, x)$$
Now, suppose we pick a tubular neighborhood $\mT$ of $\mG$ contained in $\mT'$ that is thin enough so that we may choose hubs $\Delta_0, \Delta_1, \ldots , \Delta_t$ satisfying 
$$\lfloor \frac{\ell_k}{\zeta} \rfloor \cdot  \max_{s = 0, 1, \ldots ,t} \max_{x \in \partial \Delta_s} 2 \cdot d(p_s, x) < \frac{\tilde{\epsilon}}{2} < \tilde{\epsilon}.$$
Then for any $\sigma$ in $(\mT, \rho_1)$ freely homotopic in $\mT$ to $c_\sigma \in \mC$ we have 
\begin{eqnarray*}
L_{\rho_1}(\sigma) &=& \Lcor(\sigma) + \Lhub(\sigma)\\
				     &\geq& \Lcor(c_\sigma) + \Lhub(\sigma) \\
				     &=& \Lcor(c_\sigma) + \Lhub(\sigma) + \Lhub(c_\sigma) - \Lhub(c_\sigma) \\
				     &=& L_{\rho_1}(c_\sigma) + \Lhub(\sigma) - \Lhub(c_\sigma) \\
				     &>& L_{\rho_1}(c_\sigma) - \tilde{\epsilon} \\
				     &\geq& \|(a_{c_\sigma}, b_{c_\sigma}) \|_{\infty} \;\; \mbox{(by Equation~\ref{eq:Epsilon})} \\
				     &=& \|(a_\sigma, b_\sigma)\|_{\infty}.
\end{eqnarray*} 
\noindent
In particular, if $(a_\sigma, b_\sigma) \neq (a_i, b_i)$ for some $i =1, \ldots , k$, then
$$L_{\rho_1}(\sigma) >  \|(a_\sigma, b_\sigma)\|_{\infty} \geq \ell_k.$$

Now let $\sigma$ be a loop in $(\mT, \rho_1)$ that is freely homotopic in 
$\mT$ to a minimal cycle $c_\sigma \not\in \mC$.
Then $c_\sigma$ can be taken to be $\gamma_i$ for some $i = 2, \ldots , k$ or 
$c_\sigma$ has $q$ edges where $q \geq \lfloor \frac{\ell_k}{2} \rfloor + 1$.
In the former case, since we are once again interested in a lower bound 
on the length of $\sigma$ we can assume without loss of generality that 
$\sigma$ is contained in a (flat) tubular neighborhood $\mT_i \subset \mT$ 
of $c_\sigma \equiv \gamma_i$. 
But, then it follows that since $\rho_1$ is flat that we have 
$$L_{\rho_1}(\sigma) \geq L_{\rho_1}(\gamma_i) = \ell_i = \|(a_i, b_i)\|.$$
In the latter case, we see that $\sigma$ must pass through at least $q$ corridors. 
Then since each corridor is of length at least $\zeta$ we see
$$L_{\rho_1}(\sigma) \geq \zeta q \geq \zeta \cdot (\lfloor \frac{\ell_k}{\zeta} \rfloor + 1) > \ell_k.$$
\text{}\\

In summary, consider the flat regular neighborhood $(\mT', \rho_1)$ of $\mG$ constructed in Step A 
and choose a regular neighborhood $\mT \subset \mT'$ of $\mG$ with 
``hubs'' $\{\Delta_0, \Delta_1, \ldots , \Delta_s\}$ satisfying 
$$\diam \Delta_s \leq \max_{x \in \partial \Delta_s} 2\cdot d(p_s, x) \leq \Theta \equiv \frac{\tilde{\epsilon}}{2\lfloor \frac{\ell_k}{\zeta} \rfloor},$$
for each $s = 0, 1, \ldots , t$, as in Step C. If $\sigma$ is a loop in $(\mT, \rho_1)$, then
\begin{itemize}
\item $L_{\rho_1}(\sigma) \geq \ell_k$, if $(a_\sigma, b_\sigma) \neq \pm (a_1, b_1), \ldots , \pm (a_k, b_k)$;
\item $L_{\rho_1}(\sigma) \geq \ell_i$, if $(a_\sigma, b_\sigma) = \pm(a_i, b_i)$ for some $i = 1, 2, \ldots , k$;
\item $L_{\rho_1}(\gamma_i) = \ell_i$ for each $i = 1, 2, \ldots, k$.
\end{itemize} 
\end{proof}
\text{}\\

Now let $\mT_0 \subset \mT_1 \subset \cdots \subset \mT_4 = \mT$ be a collection of 
properly nested tubular neighborhoods of the graph
$\mG$ with smooth boundaries such that 
\begin{enumerate}[(1)]
\item $\mT = \mT_4$ admits a metric $\rho_1$ as in the lemma;
\item for each $i = 1, \ldots , 4$ and $p, q \in \partial \mT_i$ we have $
d(p, \partial \mT_0) = d(q, \partial \mT_0)$ where the distance is taken with respect to the metric $\rho_1$
\end{enumerate}
and let $\Gamma_i = d(\partial \mT_i, \partial \mT_0)$ for each $i = 1, \ldots , 4$.
Now define the smooth function $r: T^2 \to \R$ via
$$r(x) = \left\{ \begin{array}{ll}
0 & \mbox{ for } x \in \overline{\mT_0} \\
d_{\rho_1}(x, \partial \mT_0) & \mbox{oterhwise}
\end{array}
\right.
$$
Now let $\kappa >0$ be such that with respect to $\kappa \rho_1$ the distance between 
$\partial \mT_2$ and $\partial \mT_1$ is at least $B$.

\begin{lemma}[cf. Lemma 5.3 of \cite{DGS}]\label{lem:TheMetric}
With the notation and assumptions above there is a Riemannian metric $g$ on $T^2$ with the following properties:
\begin{enumerate}[\bf (1)]
\item $g \succeq g_0$ on $T^2 - \mT_1$;
\item $g = g_0$ on $T^2 - \mT_3$;
\item $g \succeq \kappa \rho_1$ on $\mT_2 - \mT_1$;
\item $g = \rho_1$ on $\mT_0$;
\item $g \succeq \rho_1$ on $\mT_2$;
\end{enumerate}
where for any metrics $\rho$ and $\tilde{\rho}$ we write $\rho \succeq \tilde{\rho}$ 
if for all vectors $v$ we have $\rho(v,v) \geq \tilde{\rho}(v,v)$.
\end{lemma}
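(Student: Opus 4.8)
The plan is to construct $g$ as a pointwise interpolation, steered by the single radial function $r$, between the three metrics $\rho_1$ (near $\mG$), $\kappa\rho_1$ (on the annular buffer $\mT_2-\mT_1$), and $g_0$ (outside $\mT_3$). Concretely, I would fix a smooth non-decreasing cutoff $\psi:\R\to[0,1]$ with $\psi\equiv 1$ on $(-\infty,\Gamma_1]$, $\psi\equiv 0$ on $[\Gamma_3,\infty)$, and then set, at a point $x$ with $r(x)=s$,
\[
g_x = \psi(s)\,\mu_x + (1-\psi(s))\,(g_0)_x,
\]
where $\mu$ is itself a smooth interpolation, via a second cutoff in $s$ supported in $[\Gamma_1,\Gamma_2]$, between $\rho_1$ on $\overline{\mT_1}$ and $\kappa\rho_1$ on $\mT_2-\mT_1$ (recall $\kappa>1$ may be assumed, enlarging $\kappa$ if necessary). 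Since $r$ is smooth and each ingredient metric is smooth on the region where it is used, $g$ is a smooth Riemannian metric on all of $T^2$; the level-set condition (2) on the nested neighborhoods guarantees that $r$, hence $g$, is smooth across each $\partial\mT_i$. This is essentially the construction of Lemma~5.3 of \cite{DGS}, adapted to the present radial function, so I would cite that lemma for the routine verification that the interpolation is smooth and well defined.

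Next I would check the five listed properties, each of which is immediate from the support of the cutoffs and a pointwise convexity estimate. For a convex combination $g_x=\lambda \rho_x + (1-\lambda)\tilde\rho_x$ with $\rho\succeq\tilde\rho$ one has $g\succeq\tilde\rho$, and more generally a convex combination of metrics each $\succeq\tau$ is itself $\succeq\tau$; I would record this one-line observation first. Then: on $T^2-\mT_3$ we have $\psi\equiv 0$, giving $g=g_0$, which is (2); on $\mT_0$ we have $r\equiv 0$, so $\psi=1$ and $\mu=\rho_1$, giving $g=\rho_1$, which is (4). On $T^2-\mT_1$ the metric is a convex combination of $\mu$ and $g_0$; since on that region $\mu\succeq\rho_1$ (it equals $\rho_1$, $\kappa\rho_1$, or an interpolation of the two, all $\succeq\rho_1$ as $\kappa\ge 1$) one needs $\rho_1\succeq g_0$ and $g_0\succeq g_0$ on $T^2-\mT_1$ — wait, that is not given — so instead I would simply choose the global scale of $\rho_1$ at the outset large enough that $\rho_1\succeq g_0$ on $\overline{\mT}$; this is harmless since rescaling $\rho_1$ only rescales all the lengths $\ell_i$, which we may assume done before invoking Lemma~\ref{lem:TubNbhd} (alternatively absorb this into the constant $\kappa$). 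With that normalization, (1), (3), and (5) all follow from the convexity observation: $g$ is everywhere a convex combination of metrics each $\succeq\rho_1$ on $\mT_2$ (giving (5)), each $\succeq\kappa\rho_1$ on $\mT_2-\mT_1$ where $\psi\equiv 1$ and the $\mu$-interpolation has already reached $\kappa\rho_1$ by $\Gamma_1$ — here I would instead place the $\mu$-cutoff so that $\mu=\kappa\rho_1$ on all of $\mT_2-\mT_1$ and interpolates only on $\mT_1-\mT_0$, which makes (3) transparent — and each $\succeq g_0$ on $T^2-\mT_1$ (giving (1)).

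The one genuine point requiring care — the main obstacle — is the mutual compatibility of the cutoff placements so that all five inequalities hold \emph{simultaneously} with the \emph{same} $g$: property (4) forces $g=\rho_1$ exactly on $\mT_0$, property (3) wants $g$ at least $\kappa\rho_1$ on $\mT_2-\mT_1$, and property (2) forces $g=g_0$ outside $\mT_3$, so the interpolations must be completed within the disjoint ``gaps'' $\mT_1-\mT_0$ (to climb from $\rho_1$ to $\kappa\rho_1$) and $\mT_3-\mT_2$ (to descend from $\kappa\rho_1$ to $g_0$). This is exactly why four nested neighborhoods are needed, and it is precisely the mechanism of \cite[Lemma 5.3]{DGS}; I would therefore organize the proof by first declaring the two cutoffs with these disjoint supports, then invoking the convexity lemma region by region. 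The verification that $g$ is smooth across the boundaries is routine given property (2) of the nested neighborhoods (all $\partial\mT_i$ are level sets of $r$), so I would not belabor it.
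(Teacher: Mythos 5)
Your construction is workable, but you should be aware that it diverges from the paper's at exactly the point you flagged as delicate, and one of your two proposed repairs is not actually available. The paper never interpolates directly between $\kappa\rho_1$ and $g_0$; instead it introduces the \emph{sum} $\rho_2 = g_0 + \kappa\rho_1$ on $\mT_4$ as the ``middle'' metric. Since $\rho_2(v,v) = g_0(v,v) + \kappa\rho_1(v,v)$ dominates each summand pointwise, the outer interpolation $\widehat{g} = (f_1\circ r)\rho_2 + (1-(f_1\circ r))g_0$ is automatically $\succeq g_0$ everywhere and equals $\rho_2 \succeq \kappa\rho_1 \succeq \rho_1$ on $\mT_2$, and the inner interpolation with $\rho_1$ then gives (1)--(5) with no comparison between $\rho_1$ and $g_0$ ever needed. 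Your route forces such a comparison on the descent region, and here your first repair --- rescaling $\rho_1$ so that $\rho_1 \succeq g_0$ on $\overline{\mT}$ --- is impossible: $\rho_1$ is pinned by Lemma~\ref{lem:TubNbhd} to satisfy $L_{\rho_1}(\gamma_i) = \ell_i = \|h_i\|_\infty$, while $g_0$ has systole at least $\ell_k$, so $\rho_1 \succeq g_0$ would force $\ell_i = L_{\rho_1}(\gamma_i) \geq L_{g_0}(\gamma_i) \geq \ell_k$ for every $i$, which fails whenever the prescribed lengths are not all equal. Your parenthetical alternative --- enlarging $\kappa$ until $\kappa\rho_1 \succeq g_0$ on the compact set $\overline{\mT_4}$ --- does work, since $\kappa$ enters only as a lower bound away from $\mT_0$ (enlarging it preserves $d_{\kappa\rho_1}(\partial\mT_2,\partial\mT_1)\geq B$ and does not disturb $g=\rho_1$ on $\mT_0$), so you should commit to that option and discard the rescaling. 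With that choice, and with the cutoff supports arranged as in your final paragraph (climb from $\rho_1$ to $\kappa\rho_1$ on $\mT_1-\mT_0$, constant $\kappa\rho_1$ on $\mT_2-\mT_1$, descent to $g_0$ on $\mT_3-\mT_2$; note your initial declaration of $\psi$ as $\equiv 1$ only on $(-\infty,\Gamma_1]$ contradicts this and must be replaced by $\psi\equiv 1$ on $(-\infty,\Gamma_2]$), your convexity observation closes the argument. What the paper's sum trick buys is precisely the elimination of this bookkeeping: all five inequalities become one-line consequences of $\rho_2$ dominating both $g_0$ and $\kappa\rho_1$ by construction.
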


\begin{proof}[Proof of Lemma~\ref{lem:TheMetric}]
The proof is exactly the same as in \cite[Lemma 5.3]{DGS}, but we include it for completeness. 
First, consider the metric $\rho_2 = g_0 + \kappa \rho_1$ on $\mT = \mT_4$.
This metric clearly satisfies $\rho_2 \succeq g_0$ on $\mT$.
Now let $f_1: [0, \Gamma_4] \to [0,1]$ be a smooth function such that 
$f_1(t) = 1$ for $0 \leq t \leq \Gamma_2$ and $f_1(t) = 0$ for $\Gamma_3 \leq t \leq \Gamma_4$.
We now define a metric $\widehat{g}$ on $T^2$ as follows:
$$\widehat{g} = \left\{ 
\begin{array}{ll}
(f_1 \circ r)\rho_2 + (1-(f_1 \circ r))g_0 & \mbox{ on } \mT \\
g_0 & \mbox{on } M - \mT_3
\end{array}
\right.
$$
Then on $T^2$ we have $\widehat{g} \succeq g_0$ and on $\mT_3$ we have 
$\widehat{g} = \rho_2 \succeq \kappa \rho_1 \succeq \rho_1$.
Now let $f_2 : [0, \Gamma_4] \to [0,1]$ be a smooth function such that 
$f_2(0) = 1$ and $f_2(t) = 0$ for $\Gamma_1 \leq t \leq \Gamma_4$ 
and set 
$$g = \left\{
\begin{array}{ll}
(f_2 \circ r)\rho_1 + (1-(f_2 \circ r))\widehat{g} & \mbox {on } \mT \\ 
\widehat{g}  & \mbox{on } T^2 - \mT_1
\end{array}
\right.
$$
Then $g$ satisfies properties $(1)$-$(5)$.
\end{proof}

We now show that any metric $g$ on $T^2$ as in Lemma~\ref{lem:TheMetric} has the desired properties.
Indeed, let $g$ be  such a metric and let $\sigma$ be a homotopically non-trivial curve in $T^2$.
Then there are three cases.

\medskip
\noindent
{\bf Case A:} $\Image (\sigma) \subset \mT_2 - \mT_1$. 

\medskip
Then by Lemma~\ref{lem:TheMetric}(1) 
$g \succeq g_0$ on $T^2 - \mT_1$, so we see 
$$L_g(\sigma) \geq L_{g_0}(\sigma) \geq \Syst(T^2, g_0) = B \geq \ell_k.$$

\medskip
\noindent
{\bf Case B:} $\Image(\sigma) \cap \mT_1 \neq \emptyset$ and $\Image(\sigma) \cap (T^2 -\mT_2) \neq \emptyset$.

\medskip
Then, by Lemma~\ref{lem:TheMetric}(3) and the way in which $\kappa$ was chosen, we see
$$L_g(\sigma) \geq d_{g}(\partial \mT_2, \partial \mT_1) \geq B \geq \ell_k.$$

\medskip
\noindent
{\bf Case C:} $\Image(\sigma) \subset \mT_2$.

\medskip
If $(a_\sigma, b_\sigma) \neq (a_1, b_1), \ldots , (a_k, b_k)$, then using 
Lemma~\ref{lem:TheMetric}(5) and Lemma~\ref{lem:TubNbhd} we see
$$L_g(\sigma) \geq L_{\rho_1}(\sigma) \geq \ell_k.$$
If $(a_\sigma, b_\sigma) = (a_i, b_i)$ for some $i = 1, \ldots , k$ then using (5) and (4) of Lemma~\ref{lem:TheMetric} we see
$$L_g(\sigma) \geq L_{\rho_1}(\sigma) \geq \ell_i.$$

\medskip 
We complete the proof of Theorem~\ref{thm:StableNorm}(i) by noting that Lemma~\ref{lem:TheMetric}(4) and Lemma~\ref{lem:TubNbhd}
imply that for each $i =1, \ldots, k$ $L_g(\gamma_i) = L_{\rho_1}(\gamma_i) = \ell_i$.

\subsection{The proof of Theorem~\ref{thm:MinLengthSpec}}

Let $\Psi: \pi_1(T^2, p_0) \to H_1(T^2; \Z)$ denote the Hurewicz isomorphism and notice that 
for any $h \in H_1(T^2; \Z) = H_1(T^2; \Z)_\R$ we have $\| h \|_s = m_g(\pi(\Psi^{-1}(h)))$, 
where $\pi: \pi_1(T^2) \to \mF(T^2)$ is the natural projection of the fundamental group of $T^2$ 
onto the collection of its unoriented free homotopy classes (see p.~\pageref{def:NatProj}). 
It is then apparent that the statement ``{\bf (ii)} implies {\bf (i)}'' is actually a reformulation 
of Bangert's observation that the stable norm of a $2$-torus is strictly convex and 
the statement ``{\bf (i)} implies {\bf (ii)}'' is equivalent to Theorem~\ref{thm:StableNorm}{\bf (i)}.
This completes the proof.

\section{Convergence of the Stable Norms: the Proof of Theorem~\ref{thm:StableNorm}(ii)}\label{sec:Part2}

In this section we demonstrate that the sequence $\langle \Norm_j \rangle_{j \in \N}$ of 
toral stable norms constructed in the previous section converge in the topology of compact convergence 
to the fixed strictly convex norm $\Norm_\infty$.

Let $g$ be a metric on $T^2$ and as in the introduction for each $h \in H_1(T^2; \Z)_{\R}$ let
$$f(h) = \inf \{L_g(\sigma) : \sigma \mbox{ is a smooth curve representing the class } h \}.$$
Then we have:
\begin{enumerate}[(1)]
\item $f(h_1 + h_2) \leq f(h_1) + f(h_2)$ for any $h_1, h_2 \in H_1(T^2; \Z)_{\R}$,
\item $f(k h) = |k|f(h)$ for any $h \in H_1(T^2; \Z)_{\R}$ and $k \in \Z$; in particular, $f(-h)=f(h)$.
\end{enumerate}
From this we can conclude that for each $h_1, h_2 \in H_1(T^2; \Z)_{\R}$ 
we have $|f(h_1) - f(h_2)| \leq f(h_1- h_2) = f(h_2-h_1)$. It then follows that the associated stable norm 
$\Norm_s$ will have the property that 
$$| \|x\|_s - \|y\|_s | \leq \|x-y\|_s$$
for each $x, y \in H_1(T^2; \R) \simeq \R^2$.
Now we recall the following basic fact about norms on finite dimensional vector spaces. 

\begin{lemma}[cf. Theorem 7.7 \cite{Dym}]\label{lem:EquivNorm}
Let $\phi, \psi : \R^n \to \R$ be norms, then there are constants $0 < A \leq B$ such that
$$A \psi (x) \leq \phi(x) \leq B \psi(x).$$
In fact, $A$ and $B$ can be taken to be 
$$A = \frac{\inf \{\phi(x) :|x| = 1\}}{\left(\sum_{i = 1}^{n} \psi(\vece_i)^2\right)^{1/2}}$$
and 
$$B = \frac{\left(\sum_{i = 1}^{n} \phi(\vece_i)^2\right)^{1/2}}{\inf \{\psi(x) : |x| = 1\}  },$$
where $\{ \vece_1, \ldots , \vece_n \}$ is the standard basis for $\R^n$ and 
$| \cdot |$ denotes the standard Euclidean norm with respect to this basis.
\end{lemma}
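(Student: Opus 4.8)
The plan is to run the classical proof that all norms on a finite-dimensional space are equivalent, while keeping track of the constants. Write a generic point as $x = \sum_{i=1}^{n} x_i \vece_i$. The first step is to bound an arbitrary norm above by a multiple of the Euclidean norm: by the triangle inequality, homogeneity, and Cauchy--Schwarz,
$$\phi(x) \le \sum_{i=1}^{n} |x_i|\,\phi(\vece_i) \le \Big(\sum_{i=1}^{n} |x_i|^2\Big)^{1/2}\Big(\sum_{i=1}^{n} \phi(\vece_i)^2\Big)^{1/2} = |x|\,C_\phi,$$
where $C_\phi \equiv \big(\sum_{i} \phi(\vece_i)^2\big)^{1/2}$, and likewise $\psi(x) \le |x|\,C_\psi$. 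Together with the reverse triangle inequality $|\phi(x) - \phi(y)| \le \phi(x - y) \le C_\phi\,|x - y|$, this shows that $\phi$ and $\psi$ are Lipschitz, hence continuous, as functions on $(\R^n, |\cdot|)$.

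The second step invokes compactness: the Euclidean unit sphere $S = \{x : |x| = 1\}$ is compact, so the continuous function $\phi$ attains its infimum $m_\phi \equiv \inf\{\phi(x) : |x| = 1\}$ on $S$, and $m_\phi > 0$ since $\phi$ vanishes only at the origin while $0 \notin S$; likewise $m_\psi > 0$. For $x \ne 0$, homogeneity applied to $x/|x| \in S$ gives $\phi(x) \ge m_\phi\,|x|$ and $\psi(x) \le C_\psi\,|x|$, whence
$$\phi(x) \ge m_\phi\,|x| \ge \frac{m_\phi}{C_\psi}\,\psi(x) = A\,\psi(x),$$
and symmetrically $\phi(x) \le C_\phi\,|x| \le \frac{C_\phi}{m_\psi}\,\psi(x) = B\,\psi(x)$. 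Both inequalities hold trivially at $x = 0$; since $m_\phi, m_\psi > 0$ and $C_\phi, C_\psi > 0$ we get $A > 0$, and evaluating the chain $A\psi(x) \le \phi(x) \le B\psi(x)$ at any fixed $x \ne 0$ forces $A \le B$. This produces exactly the constants asserted in the statement.

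There is no serious obstacle here; the single point that demands care is justifying that a norm is a continuous function of its argument, which is precisely what legitimizes the compactness step. Everything else is routine manipulation with the triangle and Cauchy--Schwarz inequalities.
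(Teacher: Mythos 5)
Your proof is correct and follows essentially the same route as the paper's: the upper bound $\phi(x)\le |x|\bigl(\sum_i\phi(\vece_i)^2\bigr)^{1/2}$ via the triangle inequality and Cauchy--Schwarz, continuity of the norms, positivity of the infimum on the Euclidean unit sphere, and then combining the two chains of inequalities. If anything, you are slightly more careful than the paper in spelling out why both infima are strictly positive (compactness of the sphere plus the fact that a norm vanishes only at the origin).
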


\begin{proof}
Since $\phi$ is a norm we see that for any $x, y \in \R^n$ 
$$\phi(x) = \phi(x-y + y) \leq \phi(x-y) + \phi(y).$$
It then follows from the fact that $\phi(-v) = \phi(v)$ that 
$$|\phi(x) - \phi(y)| \leq \phi(x-y).$$
Now, let $\{ \vece_1, \ldots , \vece_n \}$ be the standard basis for $\R^n$ and let 
$x = \sum_{i =1}^{n} x_i \vece_i$ and $y = \sum_{i=1}^{n} y_i \vece_i$ be vectors in $\R^n$.
Then 
\begin{eqnarray*}
|\phi(x) - \phi(y) | &\leq& \phi(x-y) \\
			      &=& \phi(\sum_{i=1}^{n} (x_i - y_i) \vece_i ) \\
			      &\leq& \sum_{i=1}^{n} |x_i - y_i| \phi(\vece_i)\\
			      &\leq& |x-y||\sum_{i=1}^{n} \phi(\vece_i) \vece_i| \mbox{ (by the Cauchy-Schwarz Inequality)}
\end{eqnarray*}
where $| \cdot |$ denotes the usual Euclidean norm. Hence, $\phi$ is continuous and 
taking $y$ to be zero in the equation above we obtain
$$ \phi(x) = |\phi(x)| \leq |x|(\sum_{i=1}^{n} \phi(\vece_i)^2),$$
for each $x \in \R^n$. It is then clear that 
\begin{eqnarray}\label{eq:One}
|x| \inf \{ \phi(v) : |v | =1 \} \leq \phi(x) \leq |x| \sum_{i=1}^{n} \phi(\vece_i)^2,
\end{eqnarray}
for any $x \in \R^n$. 
Similarly, we see that $\Psi$ is continuous and that for each $x \in \R^n$ 
\begin{eqnarray}\label{eq:Two}
|x| \inf \{ \psi(v) : |v | =1 \} \leq \psi(x) \leq |x| \sum_{i=1}^{n} \psi(\vece_i)^2.
\end{eqnarray}
Since $\psi$ is continuous, we see that $\inf \{ \psi(v) : |v | =1 \} $ is positive. 
Therefore, we may combine Equations~\ref{eq:One} and \ref{eq:Two} to establish the claim. 
\end{proof}

Hence, we see that for any  $x , y \in \R^2$ and stable norm $\Norm_{s}$ on the $2$-torus we have 
\begin{eqnarray*}
| \|x\|_s - \|y\|_s | &\leq& \|x-y\|_{s} \\
				&\leq&  \frac{(\sum_{i = 1}^{2} \| \vece_i \|_s^2)^{1/2}}{\inf \{ |v | : \sum_{i=1}^{2} v_i^2 = 1\}  } |x-y| \\
				&=& (\sum_{i = 1}^{2} \| \vece_i \|_s^2)^{1/2}|x-y|
\end{eqnarray*}
where throughout $| \cdot |$ denotes the standard Euclidean norm on $\R$ and $\R^2$.\\

We now turn our attention to the sequence of stable norms $\langle \Norm_j \rangle_{j = 1}^{\infty}$
converging to the stable norm $\Norm_\infty$ given by Theorem~\ref{thm:StableNorm}(a). 
The $\Norm_{k}$'s were constructed so that 
for each $k \in \N$ we have $\| (a_j, b_j)\|_k = \|(a_j, b_j)\|_\infty$ for any $1\leq j \leq k$ 
and $\|(a_j, b_j)\|_k \geq \|(a_k, b_k)\|_\infty$ for all $j \geq k+1$.
Now fix $N$ large enough so that $(1,0)$ and $(0,1)$ are among the vectors 
$\{(a_j, b_j) : 1 \leq j \leq N\}$. 
Then we see that  for each $j \geq N$ we have  $\| (1,0) \|_j \equiv \| (1,0) \|_\infty$ and $\| (0,1) \|_j \equiv \| (0,1) \|_\infty$, 
and it follows from Lemma~\ref{lem:EquivNorm} that the constant  
$B = (\| (1,0) \|_\infty^2 + \| (0,1) \|_\infty^2)^{1/2} $ satisfies
$$ |\| x\|_j - \|y \|_j | \leq B |x-y|$$
for each $j \geq N$.
That is, for $j \geq N$ the stable norms $\Norm_j$ are Lipschitz 
continuous with the same Lipschitz constant $B$.
We now recall the following fact about Lipschitz continuous functions on $\R^n$.

\begin{lemma}\label{lem:CptConv}
Let $\langle f_j \rangle_{j =1}^{\infty}$ be a sequence of functions on $\R^n$
for which there exists a constant $C \geq 0$ such that for each $n$
$$| f_n(x) - f_n(y) | \leq C |x-y| \mbox{ for all } x, y \in \R^n.$$
(That is, $\langle f_n\rangle_{n=1}^{\infty}$ is a sequence of Lipschitz continuous functions with the same Lipschitz constant $C$.)
If the sequence $\langle f_n\rangle_{n =1}^{\infty}$ converges pointwise to $f: \R^n \to \R$, then 
$f = \lim_{n \to \infty} f_n$ in the topology of compact convergence. 
\end{lemma}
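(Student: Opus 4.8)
The plan is to prove Lemma~\ref{lem:CptConv} by a standard $\varepsilon/3$-argument, exploiting the \emph{uniform} Lipschitz bound to upgrade pointwise convergence to uniform convergence on an arbitrary compact set $K \subset \R^n$. First I would observe that the limit function $f$ inherits the Lipschitz constant $C$: for any $x,y$, writing $|f(x)-f(y)| \leq |f(x) - f_n(x)| + |f_n(x) - f_n(y)| + |f_n(y) - f(y)|$ and letting $n \to \infty$, the outer terms vanish by pointwise convergence while the middle term is $\leq C|x-y|$, so $|f(x)-f(y)| \leq C|x-y|$. In particular $f$ is continuous.

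Next, fix a compact set $K$ and $\varepsilon > 0$. Since $K$ is totally bounded, choose finitely many points $x_1, \ldots, x_m \in K$ such that every $x \in K$ lies within Euclidean distance $\delta \equiv \varepsilon/(3C+1)$ of some $x_i$ (if $C = 0$ the functions are all constant and the claim is trivial, so assume $C > 0$). By pointwise convergence, pick $N_0$ so large that $|f_n(x_i) - f(x_i)| < \varepsilon/3$ for all $n \geq N_0$ and all $i = 1, \ldots, m$ (a finite intersection of the relevant thresholds). Then for any $x \in K$ and any $n \geq N_0$, choosing $x_i$ with $|x - x_i| < \delta$, the triangle inequality gives
\begin{eqnarray*}
|f_n(x) - f(x)| &\leq& |f_n(x) - f_n(x_i)| + |f_n(x_i) - f(x_i)| + |f(x_i) - f(x)| \\
&\leq& C|x - x_i| + \frac{\varepsilon}{3} + C|x - x_i| \\
&<& 2C\delta + \frac{\varepsilon}{3} \;\leq\; \varepsilon.
\end{eqnarray*}
Since the bound is independent of $x \in K$, this shows $\sup_{x \in K} |f_n(x) - f(x)| \to 0$, i.e.\ $f_n \to f$ uniformly on $K$. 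As $K$ was an arbitrary compact set, $f_n \to f$ in the topology of compact convergence.

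I do not anticipate a genuine obstacle here; this is a classical equicontinuity-type argument and the only point requiring care is the bookkeeping of the three $\varepsilon/3$ contributions and the choice of $\delta$ so that $2C\delta \leq 2\varepsilon/3$. The role of the uniform Lipschitz constant is essential: it is precisely what makes the family $\langle f_n \rangle$ equicontinuous, so that control of the limit at finitely many points of a net propagates to uniform control over all of $K$. Once this lemma is in hand, it applies directly to the tail $\langle \Norm_j \rangle_{j \geq N}$, which was just shown to be uniformly Lipschitz with constant $B = (\|(1,0)\|_\infty^2 + \|(0,1)\|_\infty^2)^{1/2}$, and which converges pointwise to $\Norm_\infty$ on $\Z^2$ and hence (by continuity and density of lines of rational slope, together with the Lipschitz bound) on all of $\R^2$; this yields Theorem~\ref{thm:StableNorm}{\bf(ii)}.
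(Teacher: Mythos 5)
Your proof is correct and follows essentially the same $\varepsilon/3$-with-a-finite-$\delta$-net argument as the paper; the only cosmetic difference is that you compare $f_n$ directly to $f$ (after noting $f$ inherits the Lipschitz constant $C$), whereas the paper shows the sequence is uniformly Cauchy on $K$ by comparing $f_n$ to $f_m$. Both are valid and the bookkeeping in your version checks out.
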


\begin{proof}
We will show that the $f_j$'s form a uniformly Cauchy sequence on any compact subset $K$ of $\R^n$. 
That is, given $\varepsilon > 0$ and compact subset $K \subset \R^n$, there is an $N \in \N$ such that $|f_n(y) - f_m(y)| < \varepsilon$ 
for all  $n, m \geq N$ and $y \in K$. This implies that $f_j \to f$ uniformly on $K$.

Fix $\varepsilon >0$. Then since $\langle f_j \rangle_{j =1}^{\infty}$ is a sequence of Lipschitz continous functions with the same Lipschitz constant C we see that for any $j \in \N$ we have $|f_j(x) - f_j(y) | < \frac{\varepsilon}{3}$, 
when $|x-y| < \delta \equiv \frac{\varepsilon}{3C}$. Now, since the $f_j$'s converge pointwise to $f$ we see that
for any $x \in \R^n$ there is an $N_x \in \N$ such that $|f_n(x) - f_m(x)| < \frac{\varepsilon}{3}$ for all $n, m \geq N_x$. It then follows that 
for any $y \in B(x, \delta)$ we have for each $n, m \geq N_x$
\begin{eqnarray*}
|f_n(y) - f_m(y)| &\leq& |f_n(x) - f_n(x)| + |f_n(x) - f_m(x)|+|f_m(x) - f_m(y)| \\
			     &\leq& \frac{\varepsilon}{3} + \frac{\varepsilon}{3} + \frac{\varepsilon}{3} \\
			     &=& \varepsilon.
\end{eqnarray*} 
Now let $K$ be compact subset of $\R^n$, then there are $x_1, \ldots , x_q \in K$ 
such that $K \subset \cup_{i=1}^{q} B(x_i, \delta)$. Taking $N = \max\{N_{x_1}, \ldots N_{x_q}\}$ it follows that for any 
$y \in K$ and $n,m \geq N$ we have $|f_n(y) - f_m(y) | < \varepsilon$.
\end{proof}

Now, by design, the sequence $\langle \Norm_j \rangle_{j =1}^{\infty}$ converges pointwise to $\Norm_\infty$ 
on the rational points in $\R^2$, but by continuity and denseness we see that they converge pointwise on all of $\R^2$ to $\Norm_\infty$. Since for all $j \geq N$, the stable norm $\Norm_j$ is Lipschitz continuous with Lipschitz constant 
$B = (\| \vece_1\|_\infty^2 + \| \vece_2\|_\infty^2)^{1/2} $, it follows from the previous lemma 
that $\lim_{j \to \infty} \Norm_j = \Norm_\infty$ in the topology of compact convergence.
This completes our argument.

\section{Multiplicities in the minimum marked length spectrum of tori}\label{sec:Multiplicity}

In this section we will prove Theorem~\ref{thm:Multiplicity} which tells us that if $\ell$ is a 
length of multiplicity $m$ in the minimum length spectrum of $(T^2, g)$, then 
$n \equiv \# m_g^{-1}([0, \ell))$ is bounded from below by a function $f(m)$. 
That is, if we wish to find a length of multiplicity $m$ in the minimum length spectrum,
then we must look beyond the $f(m)$-th term of the sequence.
Before proving this theorem it will be useful to recall some facts concerning integer $n$-gons in $\R^2$.

An \emph{integer $n$-gon} is an $n$-gon  in $\R^2$ whose vertices lie in the lattice $\Z^2$.
Given an integer $n$-gon $P$, Pick's theorem tells us that the area of the region bounded by $P$, denoted by $A(P)$,  
can be computed as follows
$$A(P) = i(P) + \frac{b(P)}{2} -1,$$
where $i(P)$ denotes the number of lattice points in the interior of the region bounded by $P$ and $b(P)$ equals the number of 
lattice points on the boundary $P$.
Now for each $k$ we let $\mathcal{P}_k^{+}$ denote the collection of convex integer $k$-gons and set
$$A(k) \equiv \min\{ A(P) : P \in \mathcal{P}_k^{+} \}.$$
Bounds for the function $A(k)$ have been studied for some time. 
In fact, Andrews was the first to observe that $A(k)$ grows roughly like $k^3$ \cite{Andrews}. 
Some recent improvements and related results that will be useful in our argument are summarized below.
But, first we give a definition.

\begin{dfn}
A (bounded) region $R \subset \R^2$ is said to be \emph{centrally symmetric} with respect to $p_0 \in \R^2$, 
if for any $x_1 \in R$ there is a point $x_2 \in R$ such that $p_0$ is the midpoint of the line segment joining 
$x_1$ and $x_2$.
\end{dfn}

\begin{proposition}\label{prop:A(k)}
The function $A(k)$ enjoys the following properties:

\begin{enumerate}[{\bf (1)}]
\item (Rabinowitz, \cite{Rab}) $\frac{1}{8 \pi^2} < \frac{A(k)}{k^3} < \frac{1}{54} + O(1)$;
\item (B\'{a}r\'{a}ny-Tokushige, \cite[Theorem 1]{BT}) $\lim_{k \to \infty} \frac{A(k)}{k^3}$ exists;
\item (B\'{a}r\'{a}ny-Tokushige, \cite[Claim 1]{BT}) for every $k$ even, there is a $k$-gon $\widehat{P}_k$ that is 
centrally symmetric with respect to some $(x,y) \in \frac{1}{2} \Z^2$ and such that $A(k) = A(\widehat{P}_k)$.
\end{enumerate}
\end{proposition}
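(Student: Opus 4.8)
All three assertions are drawn from the existing literature, so the plan is simply to assemble the relevant statements and to supply the small amount of additional bookkeeping needed to match our formulation. For part (1), recall that Andrews \cite{Andrews} first established the cubic growth of $A(k)$: a convex integer $k$-gon has $k$ edge vectors pointing in pairwise distinct primitive directions that sum to zero, and a quantitative analysis of this constraint shows that $A(k)$ lies between constant multiples of $k^3$; the explicit numerical constants $\frac{1}{8\pi^2}$ and $\frac{1}{54}$ recorded in (1) are Rabinowitz's refinement \cite{Rab}. Part (2) is then immediate, since the existence of $\lim_{k\to\infty} A(k)/k^3$ is precisely the content of \cite[Theorem 1]{BT}.

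For part (3) I would invoke \cite[Claim 1]{BT}, which for each even $k$ produces a convex integer $k$-gon $\widehat{P}_k$ of minimal area, $A(\widehat{P}_k) = A(k)$, that is centrally symmetric. The only point requiring comment is the location of the center of symmetry. Since the central symmetry of a centrally symmetric convex region carries extreme points to extreme points, the vertex set of $\widehat{P}_k$ is itself invariant under the symmetry about its center $c$; pairing a vertex $v$ with its antipode $v'$ then gives $c = \frac{1}{2}(v + v') \in \frac{1}{2}\Z^2$, which is the asserted conclusion. (The hypothesis that $k$ be even is necessary here: a convex polygon admitting a central symmetry has an even number of vertices, the center being an interior point and hence fixing no vertex.)

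The only genuine subtlety --- and therefore the ``main obstacle,'' although it amounts to reading the cited source attentively rather than to producing new mathematics --- is checking that the centrally symmetric polygon furnished by \cite[Claim 1]{BT} minimizes area among \emph{all} convex integer $k$-gons, and not merely among the centrally symmetric ones. This is exactly what B\'ar\'any and Tokushige establish, by symmetrizing the cyclically ordered list of edge vectors of an arbitrary optimal $k$-gon. Beyond this observation, the proof of the proposition consists entirely of citing \cite{Andrews}, \cite{Rab} and \cite{BT}, together with the elementary remarks above.
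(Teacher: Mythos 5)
Your proposal is correct and matches the paper's treatment: the paper offers no proof of this proposition at all, simply recording the three statements with their attributions to Rabinowitz and B\'ar\'any--Tokushige, which is exactly what you do. Your added remarks (that the center of a centrally symmetric integer polygon is the midpoint of an antipodal vertex pair and hence lies in $\frac{1}{2}\Z^2$, and that central symmetry forces an even number of vertices) are accurate and harmless elaborations.
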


Now for each $k \in \N$ we let 
$$i(k) \equiv \min \{i : \mbox{ there exists a $k$-gon with exactly $i$ interior points} \}.$$
Then since any integer $k$-gon contains an inscribed $k$-gon with exactly $k$ lattice points on it,
Pick's theorem tells us that 
$$i(k) = A(k) + \frac{2-k}{2},$$
and we conclude that the problem of finding among convex $k$-gons the least number of 
interior points is the same as finding the $k$-gon of smallest area. 
The following is than an immediate consequence of Proposition~\ref{prop:A(k)}.

\begin{proposition}\label{prop:i(k)}
The function $i(k)$ enjoys the following properties:

\begin{enumerate}[{\bf (1)}]
\item $\frac{1}{8 \pi^2} + o(k) < \frac{i(k)}{k^3} < \frac{1}{54} + O(k)$;
\item $\lim_{k \to \infty} \frac{i(k)}{k^3}$ exists;
\item for every $k$ even, there is an $k$-gon $\widehat{P}_k$ that is 
centrally symmetric about the origin $(0,0)$ and such that $i(k) = i(\widehat{P}_k)$.
\end{enumerate}
\end{proposition}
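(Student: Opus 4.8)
The plan is to read all three statements off from Proposition~\ref{prop:A(k)} via the identity $i(k) = A(k) + \frac{2-k}{2}$ recorded just above the proposition, together with the elementary remark that \emph{any} convex integer $k$-gon of minimal area automatically realizes $i(k)$: if $P$ is such a polygon then Pick's formula gives $i(P) = A(k) - \frac{b(P)}{2} + 1 \le A(k) - \frac{k}{2} + 1 = i(k)$, while $i(P) \ge i(k)$ by definition, so $i(P) = i(k)$ and $b(P) = k$. Thus the whole proof is a matter of transporting the three area statements into statements about interior points.

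For (1) and (2): dividing the identity by $k^3$ gives
\[
\frac{i(k)}{k^3} = \frac{A(k)}{k^3} + \frac{1}{k^3} - \frac{1}{2k^2},
\]
and the two correction terms tend to $0$. Hence the two-sided bound of Proposition~\ref{prop:A(k)}(1) transfers to $i(k)/k^3$ with the vanishing error absorbed into the $o/O$ terms appearing in the statement, which is (1); and the same identity shows immediately that $\lim_{k\to\infty} i(k)/k^3$ exists and equals $\lim_{k\to\infty} A(k)/k^3$, which is (2).

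For (3): take the convex $k$-gon $\widehat P_k$ furnished by Proposition~\ref{prop:A(k)}(3); it satisfies $A(\widehat P_k) = A(k)$, it is centrally symmetric about some $(x,y) \in \tfrac12\Z^2$, and by the remark above $i(\widehat P_k) = i(k)$. Translating by $-(x,y)$ produces a $k$-gon centered at the origin with the same area and the same number of interior lattice points \emph{provided} $(x,y) \in \Z^2$, in which case $\widehat P_k - (x,y)$ is again an integer $k$-gon and we are done. To handle the remaining possibility, that the center of symmetry is a genuine half-lattice point, one reduces to the previous case by replacing $\widehat P_k$ with a minimal-area centrally symmetric convex integer $k$-gon whose center of symmetry is a lattice point.

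The hard part — and the only step that is not pure bookkeeping — is exactly this last reduction: guaranteeing that the centrally symmetric area-minimizer produced by B\'ar\'any--Tokushige may be chosen with its center at a lattice point rather than a half-lattice point, so that the normalizing translation stays inside $\Z^2$. Everything else follows formally from the identity $i(k) = A(k) + \frac{2-k}{2}$ and Proposition~\ref{prop:A(k)}.
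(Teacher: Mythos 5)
Your handling of parts (1) and (2) is correct and is, in substance, all the paper itself does: the paper offers no argument beyond declaring the proposition ``an immediate consequence'' of Proposition~\ref{prop:A(k)}, and your preliminary remark that an area-minimizing convex integer $k$-gon automatically minimizes the interior-point count (since $i(P)=A(k)-\tfrac{b(P)}{2}+1\le A(k)-\tfrac{k}{2}+1=i(k)$ while $i(P)\ge i(k)$ by definition) is exactly the observation needed to make the transfer via $i(k)=A(k)+\tfrac{2-k}{2}$ honest.

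The difficulty you flag in part (3), however, is not merely ``the hard part'': the reduction you defer cannot be carried out, because part (3) as stated is false. There is a parity obstruction to recentering. If a convex integer $k$-gon $P$ is centrally symmetric about the origin, then the origin lies in its interior and its interior lattice points form the set $\{0\}\cup\{\pm x_1,\pm x_2,\ldots\}$, so $i(P)$ is odd; if instead the centre of symmetry $c$ lies in $\tfrac12\Z^2\setminus\Z^2$, the involution $x\mapsto 2c-x$ fixes no lattice point and $i(P)$ is even. Hence whenever $i(k)$ is even, no origin-centred centrally symmetric integer $k$-gon can realize it. This already occurs at $k=4$: the unit square gives $i(4)=0$, whereas every convex integer quadrilateral centrally symmetric about $(0,0)$ is a parallelogram with vertices $\pm v,\pm w$ and therefore contains the origin as an interior lattice point, forcing $i\ge 1$. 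So the ``remaining possibility'' in your argument genuinely happens and cannot be reduced to the lattice-centred case. The statement that is actually used later (in the proof of Theorem~\ref{thm:Multiplicity}) concerns the quantity $i_0^{\mathrm{symm}}(2m)$, the minimum of $i(Q)$ over convex integer $2m$-gons centrally symmetric about $(0,0)$, and there only the inequality $i_0^{\mathrm{symm}}(2m)\ge i(2m)$ is needed; part (3) should accordingly be weakened to assert existence of a minimizer with centre in $\tfrac12\Z^2$ (as in Proposition~\ref{prop:A(k)}(3)), or replaced by a statement about $i_0^{\mathrm{symm}}$ rather than $i$.
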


With these preliminaries out of the way we may now prove Theorem~\ref{thm:Multiplicity}.

\begin{proof}[Proof of Theorem  \ref{thm:Multiplicity}]

Consider a torus $(T^2, g)$ which has a length $\ell$ of multiplicity $m$ in its minimum length spectrum, 
and let $\pm(a_1, b_1), \ldots , \pm(a_m, b_m) \in \Z^2$ represent the (not necessarily primitive) 
unoriented free homotopy classes in $m_g^{-1}(\ell)$, and set $n \equiv \# m_g^{-1}([0,\ell))$.
Then the points $\pm(a_1, b_1), \ldots , \pm(a_m, b_m)$ determine an  integer $2m$-gon that
is centrally symmetric about $(0,0)$ with exactly $2n-1$ interior points.
Now, for each $k \in \N$ we consider the following odd integer 
$$i_0^{\rm{symm}}(2k) \equiv \{i(Q) : Q \in \widehat{P}_{2k} \mbox{ is centrally symmetric with respect to $(0,0)$}  \}.$$
Then we see that 
$$n \geq f(m) \equiv \frac{ i_0^{\rm{symm}}(2m) + 1}{2}.$$
Since $ i_0^{\rm{symm}}(2m) \geq i(2m) = O(m^3)$, this establishes the first part of the claim.

To see that this inequality is sharp pick $m \in \N$ and let $\widehat{Q}_{2m}$ be a $2m$-gon with vertices 
$\{ \pm(a_1, b_1), \ldots , \pm(a_m, b_m)\}$ that is centrally symmetric with 
respect to $(0,0)$ and such that $ i(\widehat{Q}_{2m}) =  i_0^{\rm{symm}}(2m)$.
Since every centrally symmetric $2m$-gon with center $(0,0)$ contains an inscribed centrally symmetric $2m$-gon with 
center $(0,0)$ and whose only intersection with $\Z^2$ occurs at the $2m$-vertices, we see that 
the boundary of $\widehat{Q}_{2m}$ contains exactly the $2m$ vertices.
Now let $c$ be the boundary of a strictly convex region $B \subset \R^2$ that is centrally symmetric with respect to $(0,0)$
and such that the intersection of $c$ with $\Z^2$ is precisely the collection of vertices of $\widehat{Q}_{2m}$.
(There are many ways to find such a curve. One way is by replacing each of the segments in $\widehat{Q}_{2m}$ by convex polynomial arcs such that the resulting tangent vectors at the beginning and the end of the arcs remain outside the resulting shape. The centrally symmetric condition is easily met by doing this simultaneously on opposite edges with symmetric arcs.)

Now let $\Norm_c$ be the unique strictly convex norm on $\R^2$ such that $c$ is precisely the set of points in $\R^2$ with $\|(x,y) \|_c = \ell > 0$.
Then by Theorem~\ref{thm:StableNorm} there is a Riemannian metric $g$ on $T^2$ whose stable norm agrees with $\Norm_c$ on the set of 
$(a,b) \in \Z^2$ such that $\|(a,b) \|_c \leq \ell$ and has norm strictly larger than $\ell$ for all other lattice points. 
It follows that the metric $g$ is such that $\ell$ has multiplicity $m = \# m_g^{-1}(\ell)$ in the minimum length spectrum and 
the number of unoriented free homotopy classes for which the shortest geodesic is of length less than $\ell$ is 
precisely $\# m_g^{-1}([0, \ell) ) = f(m) \equiv \frac{ i_0^{\rm{symm}}(2m) + 1}{2}$.
\end{proof}

\begin{proof}[Proof of Corollary~\ref{cor:HypMult}]
The idea here is to take a hyperbolic once punctured or one holed torus, construct 
a comparable compact smooth closed torus from it, and apply Theorem \ref{thm:Multiplicity}. More precisely, for a one-holed torus with geodesic boundary, by glueing in a euclidean hemisphere of the same boundary length, one obtains a closed torus. For a once-punctured torus, one can mimic this construction by first removing a small horocyclic neighborhood of the cusp, of say length $1$, and then glueing a euclidean hemisphere of equator length $1$. Minimum length geodesics on this torus do not enter the added euclidean hemisphere. To see this, consider an arc of a curve that does cross a hemisphere. The arc has length at least the length of the shortest equator path between the two endpoints of the arc. The new curve obtained by replacing the arc by the equator path is either shorter or of equal length but is no longer smooth and thus cannot be of minimum length. We can conclude that a minimum length geodesic is entirely contained in the hyperbolic part of the torus. As minimum length curves are always simple closed geodesics, the result on minimum length curves on a smooth torus now naturally correspond to simple closed geodesics on the hyperbolic tori. Now Theorem~\ref{thm:Multiplicity} asserts that if there are $m$ distinct homotopy classes associated to equal minimum length geodesics, then there are at least $f(m)$ homotopy classes with shorter length representatives and this proves the corollary.
\end{proof}

\begin{remark}
We note that in \cite{MR}, McShane and Rivin used the stable norm on the homology of a punctured torus to 
study the asymptotic growth of the number of simple closed geodesics of length less than $\ell$ on a hyperbolic torus.
\end{remark}

\bibliographystyle{amsalpha}

\begin{thebibliography}{DGS}

\bibitem[A]{Andrews}
George Andrews,
\newblock A lower bond for the volume of strictly convex bodies
with many boundary lattice points,
\newblock {\em Trans. Amer. Math. Soc.}, {\bf 106} (1963), 270--279.

\bibitem[B1]{Bang}
Victor Bangert,
\newblock Minimal geodesics,
\newblock {\em Ergodic Theory Dynam. Systems}, 10(2):263--286, 1990.

\bibitem[B2]{Bang2}
Victor Bangert,
\newblock Geodesic rays, Busemann functions and monotone twist maps,
\newblock {\em Calc. Var.}, {\bf 2}: 49--63, 1994.

\bibitem[BT]{BT}
Imre B\'{a}r\'{a}ny and Norihide Tokushige,
\newblock The minimum area of convex lattice $n$-gons,
\newblock {\em Combinatorica}, {\bf 24} (2004), no. 2,  171--185.
	
\bibitem[DGS]{DGS}
Bart de Smit, Ruth Gornet and Craig J. Sutton,
\newblock Sunada's method and the covering spectrum,
\newblock J. Differential Geom., to appear.

\bibitem[D]{Dym}
Harry Dym,
\newblock Linear Algebra in Action,
\newblock Graduate Studies in Mathematics, {\bf 78}. American Mathematical Society, Providence, RI, 2007.

\bibitem[FHS]{FHS}
Michael Freedman, Joel Hass, and Peter Scott,
\newblock Closed geodesics on surfaces,
\newblock {\em Bull. London Math. Soc.},  {\bf 14} (1982), no. 5,  385--391.

\bibitem[Gr]{Gromov}
Misha Gromov,
\newblock Metric structures for Riemannian and non-Riemannian spaces,
\newblock Modern Birkh\"{a}user Classics. Birkh\"{a}user, Boston, 2007.

\bibitem[L]{Lee}
John M. Lee,
\newblock Introduction to topological manifolds,
\newblock Springer, Berlin, 2000.

\bibitem[MP]{MP}
Greg McShane and Hugo Parlier,
\newblock Multiplicities of simple closed geodesics and hypersurfaces in Teichm\"{u}ller space,
\newblock {\em Geom. Topl.}, {\bf 12} (2008), 1883--1919.

\bibitem[MR]{MR}
Greg McShane and Igor Rivin,
\newblock A norm on homology on homology of surfaces and counting simple geodesics,
\newblock {\em Internat. Math. Res. Notices} 1995, no. 2, 61--69.

\bibitem[Rab]{Rab}
S. Rabinowitz,
\newblock $O(n^3)$ bounds for the area of a convex lattice $n$-gon,
\newblock {\em Geombinatorics}, {\bf 2} (1993), 85--88.

\bibitem[Ran]{Ran}
B. Randol,
\newblock The length spectrum of a Riemann surface is always of unbounded multiplicity,
\newblock {\em Proc. Amer. Math. Soc.}, {\bf 78} (1980), no. 3, 455--456.

\bibitem[Sch]{Schmutz}
Paul Schmutz-Schaller,
\newblock Geometry of Riemann surfaces based on closed geodesics,
\newblock {\em Bull. Amer. Math. Soc.}, {\bf 35} (1998), no. 3, 193--214.


\end{thebibliography}
\def\cprime{$'$}

\vspace{0.2cm}

\end{document}